\documentclass[10pt,a4paper]{article}

\usepackage{amssymb,amsmath,mathrsfs,amsthm}
\usepackage{verbatim}
\usepackage[a4paper,includeheadfoot,left=30mm,right=30mm,top=25mm,bottom=30mm]{geometry} 
\usepackage{xcolor,graphicx}

\newtheorem{thm}{Theorem}[section]
\newtheorem{lem}[thm]{Lemma}
\newtheorem{prop}[thm]{Proposition}

\newtheorem{defn}[thm]{Definition}

\theoremstyle{definition}

\newtheorem{rmk}{Remark}
\theoremstyle{remark}


\newcommand{\hsforall}{\hspace{1mm}\forall\hspace{1mm}}						 
\renewcommand{\Re}{\operatorname*{Re}}                             
\renewcommand{\Im}{\operatorname*{Im}}                             
\renewcommand{\d}{\ensuremath{\,\mathrm{d}}}							         
\newcommand{\Mspacer}{\hspace{0.5mm}}                              
\newcommand{\M}[3]{#1_{#2\Mspacer#3}}                              
\newcommand{\Msup}[4]{#1_{#2\Mspacer#3}^{#4}}                      
\newcommand{\Msups}[5]{#1_{#2\Mspacer#3}^{#4\Mspacer#5}}           
\renewcommand{\geq}{\geqslant}                                     
\renewcommand{\leq}{\leqslant}                                     
\renewcommand{\epsilon}{\varepsilon}                               
\newcommand{\BE}{\begin{equation}}                                 
\newcommand{\EE}{\end{equation}}                                   
\newcommand{\BES}{\begin{equation*}}                               
\newcommand{\EES}{\end{equation*}}                                 
\newcommand{\BP}{\begin{pmatrix}}                                  
\newcommand{\EP}{\end{pmatrix}}                                    
\newcommand{\R}{\mathbb{R}}                                        
\newcommand{\C}{\mathbb{C}}                                        

\newcommand{\superscript}[1]{\ensuremath{^{\textrm{#1}}}}

\newcommand{\Thns}[0]{\superscript{th}}
\newcommand{\Th}[0]{\Thns~}

\def\clap#1{\hbox to 0pt{\hss#1\hss}}

\numberwithin{equation}{section}

\hyphenation{non-zero}


\title{Evolution PDEs and augmented eigenfunctions. \\ Half-line}
\author{
B. Pelloni$^1$ and D. A. Smith$^2$
\\
\footnotesize $^1$ Department of Mathematics \& Statistics, University of Reading, \\ \footnotesize Whiteknights, PO Box 220, Reading RG6 6AX UK \\
\footnotesize $^2$
\footnotesize Department of Mathematical Sciences, University of Cincinnati, OH \\ \footnotesize Current address: Department of Mathematics, University of Michigan, \\ \footnotesize Ann Arbor, MI 48109-1043 USA \\
\footnotesize email\textup{: \texttt{daasmith@umich.edu}}
}

\begin{document}
\maketitle

\begin{abstract}
The  solution of an initial-boundary value problem for a linear evolution partial differential equation posed on the half-line can be represented in terms of an integral in the complex (spectral) plane. This representation is obtained by the {\em unified transform} introduced by Fokas in the 90's. On the other hand, it is known that many initial-boundary value problems can be solved via a classical transform pair,  constructed via the spectral analysis of the associated spatial operator. For example, the Dirichlet problem for the heat equation can be solved by applying the Fourier sine transform pair. However, for many other initial-boundary value problems there is {\em no} suitable transform pair in the classical literature. Here we pose and answer two related questions: Given any well-posed initial-boundary value problem, does there exist a (non-classical) transform pair suitable for solving that problem? If so, can this transform pair be constructed via the spectral analysis of a differential operator? The answer to both of these questions is positive and given in terms of {\em augmented eigenfunctions}, a novel class of spectral functionals. These are eigenfunctions of a suitable differential operator in a certain generalised sense, they provide an effective spectral representation of the operator, and are associated with a transform pair suitable to solve the given initial-boundary value problem.
\end{abstract}

\subsubsection*{AMS MSC2010}35P10 (primary), 35C15, 35G16, 47A70 (secondary).

\section{Introduction} \label{sec:Intro}

In this paper we consider  initial-boundary value problems (IBVP) for  linear evolution constant-coefficient partial differential equations (PDE). The classical transform pairs used to solve problems of this kind  are based on the representation of the given initial condition  as an expansion in a complete system of (generalised) eigenfunctions of an appropriate differential operator, namely the operator associated with the spatial part of the IBVP. (We assume the usual Hilbert space structure, inherited from $L^2$, on the underlying function space.)  

\noindent
This method relies crucially upon two properties, namely

(1) the completeness of the spectral system;

(2)  the convergence of the expansion of the initial condition in the system. 

\noindent
It is not surprising that such approaches fail, even for simple high-order problems, as soon as the differential operator is non-self-adjoint~\cite{Pel2005a}. 

On the other hand, problems of this type can be solved using the unified transform method, introduced by Fokas in the late 90's~\cite{Fok1997a,FP2001a,FS1999a}. Indeed, a representation of the solution, assuming it exists and is unique, can be given by Fokas' approach regardless of order or complexity of boundary conditions. Moreover, the unified method was used by the present authors to obtain well-posedness criteria~\cite{Pel2004a,Smi2012a}.

\smallskip
In this paper we interpret the solution representation given by the unified transform method of Fokas in terms of integral transform pairs, and discuss the spectral meaning of these transform pairs. Herein, we provide results for initial-boundary value problems on a semi-infinite domain and the associated differential operators. This discussion complements the picture presented in~\cite{FS2013a}, where problems posed on a finite spatial interval are studied. In~\cite{Smi2014a}, the results of the present work are compared and contrasted with the finite interval results.

More specifically, we derive transform pairs for boundary value problems for constant coefficient PDEs in two independent variables, of the general form 
\BE
\frac{\partial}{\partial t}q(x,t)+a(-i)^n\frac{\partial^n}{\partial x^n}q(x,t)=0,\quad a\in\C,
\label{PDEin}
\EE
posed on the half line $(0,\infty)$. It is known \cite{FS1999a} what boundary conditions must be imposed to obtain a problem that admits a unique solution, and we consider only such well-posed problems. 
Any initial-boundary value problem for (\ref{PDEin})  is naturally associated to the study of a differential operator  such as the operator $S$ defined below by~\eqref{eqn:defnS}, complemented with the appropriate boundary conditions.  The spectral  representation of this operator and its diagonalisation are described by introducing a more general type of eigenfunctions, that we call {\em augmented eigenfunctions}, and that can be read off  the integral representation of the PDE problem. In addition, the {\em completeness} of the eigenfunction family and the {\em convergence} of the associated expansion can be obtained through the PDE results obtained using the unified transform method of Fokas.

\subsubsection*{The main illustrative examples}
Throughout the paper, we will use two examples to illustrate the main results. To wit, consider the following initial-boundary value problems:
\newline\noindent{\bf Problem 1}: the linearised Korteweg-de Vries (LKdV) equation
\begin{subequations} \label{eqn:introIBVP.1}
\begin{align} \label{eqn:introIBVP.1:PDE}
q_t(x,t) + q_{xxx}(x,t) &= 0 & (x,t) &\in (0,\infty)\times(0,T), \\
q(x,0) &= f(x) & x &\in [0,\infty), \\
q(0,t) &= 0 & t &\in [0,T].
\end{align}
\end{subequations}
\noindent{\bf Problem 2}: the reverse-time linearised Korteweg-de Vries equation
\begin{subequations} \label{eqn:introIBVP.2}
\begin{align} \label{eqn:introIBVP.2:PDE}
q_t(x,t) - q_{xxx}(x,t) &= 0 & (x,t) &\in (0,\infty)\times(0,T), \\
q(x,0) &= f(x) & x &\in [0,\infty), \\
q(0,t) = q_x(0,t) &= 0 & t &\in [0,T].
\end{align}
\end{subequations}
It is shown in~\cite{Fok1997a, FS1999a} that these problems are well-posed. The solution of problem~1 can be expressed in the form
\begin{subequations} \label{eqn:introIBVP.solution.2}
\BE
q(x,t) = \frac{1}{2\pi}\int_{\Gamma_1} e^{i\lambda x + i\lambda^3t} \zeta_1(\lambda;f)\d\lambda + \frac{1}{2\pi}\int_{\Gamma_0} e^{i\lambda x + i\lambda^3t} \hat{f}(\lambda)\d\lambda,
\EE
where $\Gamma_1$ is the oriented boundary of the domain $\{\lambda\in\C^+:\Re(-i\lambda^3)<0\}$ perturbed away from $0$, as shown in figure~\ref{fig:3o-cont}. Similarly, the solution of problem~2 can be expressed as
\begin{multline}
q(x,t) = \frac{1}{2\pi}\int_{\Gamma_1} e^{i\lambda x + i\lambda^3t} \zeta_1(\lambda;f)\d\lambda + \frac{1}{2\pi}\int_{\Gamma_2} e^{i\lambda x + i\lambda^3t} \zeta_2(\lambda;f)\d\lambda \\
+ \frac{1}{2\pi}\int_{\Gamma_0} e^{i\lambda x + i\lambda^3t} \hat{f}(\lambda)\d\lambda,
\end{multline}
\end{subequations}
where $\Gamma_1$ and $\Gamma_2$ are the connected components of the oriented boundary of the domain $\{\lambda\in\C^+:\Re(-i\lambda^3)<0\}$, perturbed away from $0$, as shown in figure~\ref{fig:3o-cont}. In both problems, the contour $\Gamma_0$ is $\R$ perturbed away from $0$ along a small semicircular arc in $\C^+$. The function $\hat{f}$ denotes the Fourier transform of $f(x)$, extended to a function on $\C^+$, given by
\BE
\hat{f}(\lambda) = \int_0^\infty e^{-i\lambda x}f(x)\d x, \qquad \lambda\in\C^+,
\EE
and the functions $\zeta_j(\lambda)$, $j=1,2$, appearing in the solution representations are defined as follows:
\begin{align}\label{eqn:introIBVP.Zeta.1}
{\bf Problem ~1}: \quad \zeta_1(\lambda) &= - \alpha\hat{f}(\alpha\lambda) - \alpha^2\hat{f}(\alpha^2\lambda), & \lambda &\in \Gamma_1. \\
\label{eqn:introIBVP.DeltaZeta.2}
{\bf Problem ~2}: \quad \zeta_1(\lambda) &= \hat{f}(\alpha^2\lambda), & \lambda &\in \Gamma_1, \\
\zeta_2(\lambda) &= \hat{f}(\alpha\lambda),   & \lambda &\in \Gamma_2.
\end{align}
where $\alpha$ is a cube root of unity:
\BE
\alpha =e^{2\pi i/3}.
\label{alpha}\EE
In the sequel, we will consider the analytic extension of the functions $\zeta_j$ to appropriate closed sectors, without further comment. 
\begin{figure}
\begin{center}
\includegraphics{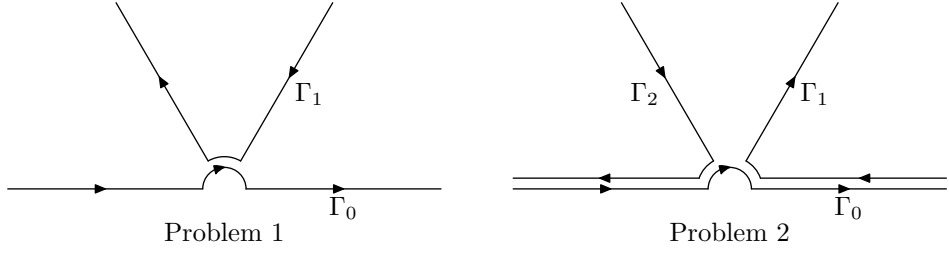}
\caption{Contours for the LKdV and reverse-time LKdV equations.}
\label{fig:3o-cont}
\end{center}
\end{figure}

\subsubsection*{Transform pair}

It is well-known that the half-line homogeneous Dirichlet problem for the heat equation $q_t=q_{xx}$ on $(0,\infty)$ may be solved by using the Fourier sine transform pair. The solution is
\BE
q(x,t) = f\left[e^{-\lambda^2t}F[f](\lambda)\right](x),
\EE
where the direct and invese transforms are defined as follows:
\begin{align} \label{eqn:Sine.Transform.Pair}
f(x) &\mapsto F[f](\lambda): & F[f](\lambda) &= \frac{2}{\pi} \int_0^\infty \sin(\lambda x) f(x)\d x, & \lambda &\in [0,\infty), \\
F(\lambda) &\mapsto f[F](x): & f[F](x)       &= \int_0^\infty \sin(\lambda x) F(\lambda)   \d\lambda, & x &\in [0,\infty).
\end{align}
Similarly, the half-line homogeneous Neumann problem for the heat equation is solved with the Fourier cosine transform pair. However, for higher order IBVP, the standard sine, cosine and exponential Fourier transforms are inadequate. Moreover, classical separation of variables techniques often do not yield the requisite transform pairs, in contrast with what one may expect based on the second order examples. This is due to the fact that the boundary conditions may be non-self-adjoint, or non-separable~\cite{FS2012a}.

It turns out that the unified transform method of Fokas provides an algorithm for constructing a transform pair tailored to a given initial-boundary value problem even in such cases. For example, the integral representations~\eqref{eqn:introIBVP.solution.2} give rise to the following transform pair, which is tailored for solving problems~1 and~2:
\begin{subequations} \label{eqn:introTrans.1.1}
\begin{align} \label{eqn:introTrans.1.1a}
f(x) &\mapsto F[f](\lambda): & F[f](\lambda) &= \int_0^\infty \phi^j(x,\lambda)f(x)\d x & \lambda &\in \Gamma_j, \\ \label{eqn:introTrans.1.1b}
F(\lambda) &\mapsto f[F](x): & f[F](x) &= \sum_{j=0}^{1\text{ or }2} \int_{\Gamma_j} e^{i\lambda x} F(\lambda) \d\lambda, & x &\in [0,\infty),
\end{align}
where 
\begin{align} \label{eqn:introTrans.1.1bb}
\phi^0(x,\lambda)     &= \frac{1}{2\pi}e^{-i\lambda x}, & \lambda &\in \Gamma_0, \\
\label{eqn:introTrans.1.1c}
\mbox{(for problem~1)}\quad \phi^1(x,\lambda)     &= \frac{-1}{2\pi} \left[ \alpha e^{-i\alpha\lambda x} + \alpha^2 e^{-i\alpha^2\lambda x} \right], & \lambda &\in \Gamma_1, \\ 
\label{eqn:introTrans.1.1d}
\mbox{(for problem~2)}\quad \phi^{1}(x,\lambda)   &= \frac{1}{2\pi} e^{-i\alpha^2\lambda x}, & \lambda &\in \Gamma_1, \\ \label{eqn:introTrans.1.1e}
\phi^{2}(x,\lambda)   &= \frac{1}{2\pi} e^{-i\alpha\lambda x},   & \lambda &\in \Gamma_2,
\end{align}
\end{subequations}
with $\alpha$ given by (\ref{alpha}).

The validity of these transform pairs is established in section~\ref{sec:Transforms.valid}. In  section~\ref{sec:Transform.Method} we prove that the solution of problems~1 and~2 is given by
\BE \label{eqn:introIBVP.solution.transform.1}
q(x,t) = f\left[e^{i\lambda^3t}F[f](\lambda)\right](x).
\EE

The transform pairs~\eqref{eqn:introTrans.1.1} are much less symmetric than the Fourier sine transform pair~\eqref{eqn:Sine.Transform.Pair}. This is not entirely surprising. One would expect the direct transform to be related, in some way, to the spectral representation of the spatial differential operator, while the inverse transform should be associated with the spectral representation of the adjoint operator. As the underlying spatial differential operator is not self-adjoint, these representations will generally be different. However what {\em is} surprising is the fact, described in the sequel, that these transforms are not constructed in the usual way in terms of some associated spectral objects, as the expansions resulting from such a construction may fail to be convergent.  

\subsubsection*{The generalised spectral representation of Gelfand}

To illustrate the need for introducing a generalised notion of spectral representation we start with a classical example. Let $\mathcal{S}[0,\infty)$ denote the Schwartz space of half-line restrictions of rapidly-decaying functions. Suppose we seek eigenfunctions, in the usual sense, of the spatial differential operator $S$ associated with the Dirichlet problem for the heat equation, given by
\BE
(Sf)(x) = -f''(x), \qquad \hsforall f\in \mathcal{S}[0,\infty) \mbox{ such that } f(0) = 0.
\label{Ssine}\EE
Thus we seek a function $f=f_\lambda$ such that  $-f_\lambda''(x) = \lambda^2f_\lambda(x)$, $\lambda \in\R$. This  implies $f_\lambda(x) = Ae^{i\lambda x} + Be^{-i\lambda x}$ and the boundary condition yields $B=-A$, so that 
\BE
f_\lambda(x) = A'\sin(\lambda x).
\EE
But, for $f\in \mathcal{S}[0,\infty)$, we must have $A'=0$ so there are no nonzero eigenfunctions of $S$.

In~\cite{GS1967a,GV1964a}, Gel'fand and coauthors described the concept of eigenfunctionals or \emph{generalised eigenfunctions}. Namely, they sought functionals $F[\cdot](\lambda)\in(\mathcal{S}[0,\infty))'$ such that for every $\lambda\in\R$, 
\BE \label{eqn:DHeat.Eigenfunctionals1}
F[Sf](\lambda) = \lambda^2F[f](\lambda).
\EE
The above relation holds provided
\BE \label{eqn:DHeat.Eigenfunctionals2}
F[f](\lambda) = \frac{2}{\pi}\int_0^\infty\sin(\lambda x)f(x)\d x.
\EE
The functionals $F[\cdot](\lambda)$ are the generalised eigenfunctions of the operator $S$ defined by (\ref{Ssine}). Note that the generalised eigenfunction corresponding to a given $\lambda\in\R$ is precisely the evaluation at $\lambda$ of the direct transform used to solve the corresponding IBVP; generalised eigenfunctions are therefore very natural spectral objects.

The primary achievement of~\cite[chapter~1]{GV1964a} is to elucidate how generalised eigenfunctions are a relevant spectral object: they provide a spectral representation of any self-adjoint linear operator,
corresponding to a spectral parameter $\lambda\in\R$ that can be interpreted as a continuous eigenvalue. Indeed, Gel'fand still uses the term ``eigenvalue'' for the continuous spectral parameter and we will follow his convention. 

Note that Gel'fand's formulation of generalised eigenfunctions requires self-adjointness of the linear differential operator for completeness results. This certainly holds for the Dirichlet heat operator but the non-self-adjoint boundary conditions considered in problems~1 and~2 preclude an application of the spectral theory presented in~\cite[chapter~1]{GV1964a}. 

\subsubsection*{Augmented eigenfunctions}
In light of the discussion in the previous section, it is natural to ask whether the transform pairs~\eqref{eqn:introTrans.1.1}, which were derived through the unified transform method of Fokas to solve problems~1 and~2, have similar spectral meanings to the sine transform. In what follows, we describe the abstract notion of~\emph{augmented eigenfunctions}.

\begin{defn} \label{defn:Aug.Eig}
Let $I\subset \R$ be open and let $C$ be a topological vector space of functions defined on the closure of $I$, with sufficient smoothness and decay conditions. 

\noindent
Let $\Phi\subseteq C$ and let $L:\Phi\to C$ be a linear differential operator of order $n$. 

\noindent
Let $\Gamma$ be an oriented contour in $\C$ and let $E=\{E[\cdot](\lambda):\lambda\in\Gamma\}$ be a family of functionals $E[\cdot](\lambda)\in C'$. Suppose there exist corresponding \emph{remainder} functionals $R[\cdot](\lambda)\in C'$ 
such that
\BE \label{eqn:defnAugEig.AugEig}
E[L\phi](\lambda) = \lambda^n E[\phi](\lambda) + R[\phi](\lambda), \qquad\hsforall\phi\in\Phi, \hsforall \lambda\in\Gamma.
\EE

If
\BE \label{eqn:defnAugEig.Control1}
\int_\Gamma e^{i\lambda x} R[\phi](\lambda)\d\lambda = 0, \qquad \hsforall \phi\in\Phi, \hsforall x\in I,
\EE
then we say $E$ is a family of \emph{type~\textup{I} augmented eigenfunctions} of $L$ up to integration along $\Gamma$.

If
\BE \label{eqn:defnAugEig.Control2}
\int_\Gamma \frac{e^{i\lambda x}}{\lambda^n}  R[\phi](\lambda)\d\lambda = 0, \qquad \hsforall \phi \in\Phi, \hsforall x\in I, 
\EE
then we say $E$ is a family of \emph{type \textup{II}~augmented eigenfunctions} of $L$ up to integration along $\Gamma$.
\end{defn}

Note that we cannot restrict the spectral parameter to real values, as the resulting expansion may then fail to converge, as in the sine example above. In the definition above the crucial spectral parameter takes the form $\lambda^n$. Hence in general, even when $\lambda^n\in\R$, the usual spectral parameters given by the $n$\Th roots $\lambda$ are complex, and the eigenfunctionals involve complex integration.
This mirrors the situation with representing the solution of the initial-boundary value problem as an integral along a complex contour, and is a manifestation of the lack of symmetry in the operator.

\begin{rmk} \label{rmk:Pseudospectra}
The remainder functional $R[\cdot](\lambda)$ appears also 
in the theory of pseudospectra~\cite{ET2005a}. In that context, it is required that the norm of  $R[\cdot](\lambda)$ be less than some small value. Our definition serves a different application, and rather than a small norm, we require that the integral of $\exp(i\lambda x)R[\phi](\lambda)$ along the contour $\Gamma$ vanishes. 
\end{rmk}

It will be shown in section~\ref{sec:Spectral} that $\{F[\cdot](\lambda):\lambda\in\Gamma\}$ is a family of type~II augmented eigenfunctions of the differential operator representing the spatial part of problem~1 or problem~2, with eigenvalue $\lambda^3$. It will also be shown that $\{F[\cdot](\lambda):\lambda\in\Gamma_1\}$ is a family of type~I augmented eigenfunctions of the spatial differential operator in problem~1; the corresponding functionals arising in problem~2, $\{F[\cdot](\lambda):\lambda\in\Gamma_1\cup\Gamma_2\}$, do not form a family of type~I augmented eigenfunctions.

\subsubsection*{Spectral representation of non-self-adjoint operators}

The definition of augmented eigenfunctions, in contrast to the generalized eigenfunctions of Gel'fand and Vilenkin~\cite[section 1.4.5]{GV1964a}, allows the occurrence of remainder functionals. However, the contribution of these remainder functionals is eliminated by multiplying by the Fourier kernel and integrating over $\Gamma$. Indeed, integrating equation~\eqref{eqn:defnAugEig.AugEig} over $\Gamma$ with respect to the Fourier kernel gives rise to a non-self-adjoint analogue of the spectral representation of an operator.

\begin{defn} \label{defn:Complete}
We say that $E=\{E[\cdot](\lambda):\lambda\in\Gamma\}$ is a \emph{complete} family of functionals $E[\cdot](\lambda)\in C'$ if
\BE \label{eqn:defnGEInt.completecriterion}
\phi\in\Phi \mbox{ and } E[\phi](\lambda) = 0 \hsforall \lambda\in\Gamma \quad \Rightarrow \quad \phi=0.
\EE
\end{defn}

We now define a spectral representation of the non-self-adjoint differential operators we study in this paper.

\begin{defn} \label{defn:Spect.Rep.II}
Suppose that $E=\{E[\cdot](\lambda):\lambda\in\Gamma\}$ is a system of type~\textup{II} augmented eigenfunctions of $L$ up to integration over $\Gamma$, and that
\BE \label{eqn:Spect.Rep.defnI.conv}
\int_\Gamma e^{i\lambda x} E[\phi](\lambda) \d\lambda \mbox{\textnormal{ converges }} \hsforall \phi\in\Phi, \hsforall x\in I.
\EE
Furthermore, assume that $E$ is a complete system in the sense of definition~\ref{defn:Complete}. Then we say that $E$ provides a \emph{spectral representation} of $L$ in the sense that
\BE \label{eqn:Spect.Rep.I}
\int_\Gamma e^{i\lambda x} \frac{1}{\lambda^n} E[L\phi](\lambda) \d\lambda = \int_\Gamma e^{i\lambda x} E[\phi](\lambda) \d\lambda \qquad \hsforall \phi\in\Phi, \hsforall x\in I.
\EE
\end{defn}

\begin{defn} \label{defn:Spect.Rep.I.II}
Suppose that $E_{(\mathrm{I})}=\{E[\cdot](\lambda):\lambda\in\Gamma_{(\mathrm{I})}\}$ is a system of type~\textup{I} augmented eigenfunctions of $L$ up to integration over $\Gamma_{(\mathrm{I})}$ and that
\BE \label{eqn:Spect.Rep.defnI.II.conv1}
\int_{\Gamma_{(\mathrm{I})}} e^{i\lambda x} E[L\phi](\lambda) \d\lambda \mbox{\textnormal{ converges }} \hsforall \phi\in\Phi, \hsforall x\in I.
\EE
Suppose also that $E_{(\mathrm{II})}=\{E[\cdot](\lambda):\lambda\in\Gamma_{(\mathrm{II})}\}$ is a system of type~\textup{II} augmented eigenfunctions of $L$ up to integration over $\Gamma_{(\mathrm{II})}$ and that
\BE \label{eqn:Spect.Rep.defnI.II.conv2}
\int_{\Gamma_{(\mathrm{II})}} e^{i\lambda x} E[\phi](\lambda) \d\lambda \mbox{\textnormal{ converges }} \hsforall \phi\in\Phi, \hsforall x\in I.
\EE
Furthermore, assume that $E=E_{(\mathrm{I})}\cup E_{(\mathrm{II})}$ is a complete system in the sense of definition~\ref{defn:Complete}. Then we say that $E$ provides a \emph{spectral representation} of $L$ in the sense that
\begin{subequations} \label{eqn:Spect.Rep.II}
\begin{align} \label{eqn:Spect.Rep.II.1}
\int_{\Gamma_{(\mathrm{I})}} e^{i\lambda x} E[L\phi](\lambda) \d\lambda &= \int_{\Gamma_{(\mathrm{I})}} \lambda^n e^{i\lambda x} E[\phi](\lambda) \d\lambda & \hsforall \phi &\in\Phi, \hsforall x\in I, \\ \label{eqn:Spect.Rep.II.2}
\int_{\Gamma_{(\mathrm{II})}} \frac{1}{\lambda^n} e^{i\lambda x} E[L\phi](\lambda) \d\lambda &= \int_{\Gamma_{(\mathrm{II})}} e^{i\lambda x} E[\phi](\lambda) \d\lambda & \hsforall \phi &\in\Phi, \hsforall x\in I.
\end{align}
\end{subequations}
\end{defn}

Completeness is an essential component of any definition of a spectral representation; see Gel'fand's definition~\cite{GV1964a}. Indeed, otherwise, for some nonzero $\phi\in\Phi$, equation~\eqref{eqn:Spect.Rep.I} is trivially $0=0$. Crucially, \emph{it is possible to obtain the requisite completeness and convergence results by studying the IBVP} associated with the operator.

\begin{rmk} \label{rmk:defnAugEig.ev.power}
Our definitions above are given for $L$ a linear differential operator equal to its principal part. Note however that the unified transform method of Fokas can be applied to problems where the associated spatial differential operator has an arbitrary polynomial~\cite{FP2001a}, and even rational~\cite{DV2011a,FP2005b}, characteristic. It is reasonable to expect that the definition of augmented eigenfunctions and the theory presented in this paper could be extended at least to these cases, albeit with significant notational complication. In order to simplify the presentation, we avoid these complications in the present work.

The unified transform method has not yet been implemented for partial differential equations with variable coefficients. To extend the results of this paper to operators with variable coefficients would require either such an extension of the unified transform method, or the development of a new approach to prove the theorems presented below.
\end{rmk}

\subsubsection*{Results and organisation of paper}

The two problems above are each typical of a class of IBVP. Indeed, for each well-posed half-line IBVP, we can always use the unified transform method of Fokas to construct a transform-inverse transform pair tailored to the problem, where the forward transform can be viewed as a family of type~II augmented eigenfunctions. Moreover, these type~II augmented eigenfunctions provide a spectral representation of the associated differential operator in the sense of definition~\ref{defn:Spect.Rep.II}. These results are the contents of proposition~\ref{prop:Transform.Method:General:2} and theorem~\ref{thm:Diag.2}.

If, as in problem~1 but not problem~2,
\BE \label{eqn:Contours.for.type.I}
\mbox{the contour } \bigcup_{j\geq1}\Gamma_j \mbox{ has no semi-infinite component lying on } \R,
\EE
then the family of functionals
\BE
\left\{F[\cdot](\lambda):\lambda\in\bigcup_{j\geq1}\Gamma_j\right\}
\EE
is a family of type~I augmented eigenfunctions. The class of problems for which statement~\eqref{eqn:Contours.for.type.I} holds is described in theorem~\ref{thm:Diag.5}; it is also shown that the augmented eigenfunctions provide a spectral representation in the sense of definition~\ref{defn:Spect.Rep.I.II}. Since $\{F[\cdot](\lambda):\lambda\in\Gamma_0\}$ is never a family of type~I augmented eigenfunctions, $S$ cannot have a spectral representation provided solely by type~I augmented eigenfunctions.

In section~\ref{sec:Transforms.valid}, we establish that the integral transforms~\eqref{eqn:introTrans.1.1} are indeed valid transform-inverse transform pairs and then extend this result to the general case. Namely, we define a general $n$\Th order operator $S$, with arbitrary linear boundary conditions. We also define associated well-posed IBVP and the transform pairs used to solve these IBVP. To complete section~\ref{sec:Transforms.valid}, we prove that the general integral transforms also give valid transform-inverse transform pairs.
In section~\ref{sec:Transform.Method}, we show that the transform pair may be used to solve the IBVP, first for the example problems~1 and~2, and then in general.
Finally, in  section~\ref{sec:Spectral}, we show that the forward transforms may be viewed as augmented eigenfunctions of the operator $S$ and prove results on the spectral representation of $S$ via its augmented eigenfunctions.

\section{Validity of transform pairs} \label{sec:Transforms.valid}

In section~\ref{ssec:Transforms.valid:LKdV} we will show the validity of the transform pairs defined by equations~\eqref{eqn:introTrans.1.1}. In section~\ref{ssec:Transforms.valid:LKdV:General} we derive an analogous transform pair for a general IBVP. In section~\ref{ssec:GeneralValidity}, we establish the validity of the general transform pair.

Throughout this paper, we work in the space of half-line restrictions of smooth, compactly supported functions,
\BE \label{eqn:defn.C}
C = C^\infty_0[0,\infty) = \{ f|_{[0,\infty)} : f\in C^\infty_0(\R) \}.
\EE
The unified transform method has been shown to be valid on the Schwartz space $\mathcal{S}[0,\infty)$, and even on spaces with lower regularity~\cite{FS1999a}. In order to ensure the usual Fourier transform is defined everywhere on $\Gamma_0$, some additional decay beyond Schwartz is required (see equations~\eqref{eqn:introIBVP.solution.2} and figure~\ref{fig:3o-cont}). We may recover the usual space of validity of the unified method by observing that $C$ is dense when considered as a subspace of $\mathcal{S}[0,\infty)$. See also remark~\ref{rmk:Gamma0.shape}.

\subsection{Linearized KdV} \label{ssec:Transforms.valid:LKdV}

\begin{prop} \label{prop:Transforms.valid:LKdV:2}
Let $F[f](\lambda)$ and $f[F](x)$ be given by equations~\emph{\eqref{eqn:introTrans.1.1a}--\eqref{eqn:introTrans.1.1c}}.
For all $f\in C$ such that $f(0)=0$ and for all $x\in(0,\infty)$, we have
\BE \label{eqn:Transforms.valid:LKdV:prop2:prob1}
f[F[f]](x) = f(x).
\EE
Let $F[f](\lambda)$ and $f[F](x)$ be given by equations~\emph{\eqref{eqn:introTrans.1.1a}--\eqref{eqn:introTrans.1.1bb}},~\eqref{eqn:introTrans.1.1d} and~\eqref{eqn:introTrans.1.1e}.
For all $f\in C$ such that $f(0)=f'(0)=0$ and for all $x\in(0,\infty)$,
\BE \label{eqn:Transforms.valid:LKdV:prop2:prob2}
f[F[f]](x) = f(x).
\EE
\end{prop}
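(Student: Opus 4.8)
The plan is to prove the inversion identity $f[F[f]](x)=f(x)$ by direct substitution and contour deformation, reducing it to the classical Fourier inversion theorem. For Problem~1, substituting the definition~\eqref{eqn:introTrans.1.1a} of $F[f]$ into the inverse transform~\eqref{eqn:introTrans.1.1b} gives
\BES
f[F[f]](x) = \int_{\Gamma_0} e^{i\lambda x}\left(\int_0^\infty \phi^0(y,\lambda)f(y)\d y\right)\d\lambda + \int_{\Gamma_1} e^{i\lambda x}\left(\int_0^\infty \phi^1(y,\lambda)f(y)\d y\right)\d\lambda.
\EES
Using $\phi^0(y,\lambda)=\frac{1}{2\pi}e^{-i\lambda y}$, the first term is formally the Fourier inversion of $f$: if I could replace $\Gamma_0$ by $\R$, it would yield exactly $f(x)$ for $x>0$ (and, because $f$ is supported on $[0,\infty)$, nothing extra). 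So the first step is to show that the $\Gamma_1$ contribution vanishes, which is where the boundary condition $f(0)=0$ and the analyticity of $\hat f$ must enter.

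First I would interchange the order of integration in the $\Gamma_1$ term (justified because $f\in C$ is compactly supported and smooth, so the inner $x$-integral produces an entire, rapidly decaying function of $\lambda$ along the contour) and recognize $\int_0^\infty e^{-i\alpha\lambda y}f(y)\d y=\hat f(\alpha\lambda)$ and similarly for $\alpha^2$. Thus the $\Gamma_1$ term becomes $\frac{-1}{2\pi}\int_{\Gamma_1} e^{i\lambda x}\left[\alpha\hat f(\alpha\lambda)+\alpha^2\hat f(\alpha^2\lambda)\right]\d\lambda$, which is exactly $\frac{1}{2\pi}\int_{\Gamma_1}e^{i\lambda x}\zeta_1(\lambda)\d\lambda$ with $\zeta_1$ as in~\eqref{eqn:introIBVP.Zeta.1}. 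The key claim is that this integral is zero for $x>0$. The mechanism: $\hat f(\alpha\lambda)$ and $\hat f(\alpha^2\lambda)$ are analytic in the sector bounded by $\Gamma_1$, and the exponential $e^{i\lambda x}$ decays in the relevant directions of $\C^+$, so Jordan's lemma together with Cauchy's theorem lets me deform $\Gamma_1$ to infinity, collapsing the integral. The role of the boundary condition $f(0)=0$ is to kill the boundary term that would otherwise obstruct this deformation, since integration by parts in $\hat f(\lambda)=\int_0^\infty e^{-i\lambda y}f(y)\d y$ produces a factor $f(0)$; requiring $f(0)=0$ ensures the large-$\lambda$ decay is strong enough (an extra power of $\lambda^{-1}$) for the arc contributions to vanish.

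For Problem~2 the structure is identical: now there are two sectorial contours $\Gamma_1,\Gamma_2$ with $\phi^1,\phi^2$ given by~\eqref{eqn:introTrans.1.1d}--\eqref{eqn:introTrans.1.1e}, and the same interchange-and-deform argument shows each of the $\Gamma_1$ and $\Gamma_2$ integrals vanishes for $x>0$, leaving only the $\Gamma_0$ term. Here two boundary conditions $f(0)=f'(0)=0$ are needed because the operator is third order and the contour geometry demands two extra powers of decay in $\lambda$ to justify closing the contours. In both problems the final step is to handle the $\Gamma_0$ integral: since $f\in C\subset\mathcal{S}[0,\infty)$, its extension by zero to $\R$ is Schwartz, and the small semicircular perturbation of $\Gamma_0$ away from $0$ does not affect the integral (the integrand has no pole there, only the contour avoids a potential issue at $\lambda=0$ shared with the sectorial contours), so $\frac{1}{2\pi}\int_{\Gamma_0}e^{i\lambda x}\hat f(\lambda)\d\lambda=f(x)$ by standard Fourier inversion.

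I expect the main obstacle to be making the contour-deformation and Jordan's-lemma argument rigorous: one must track precisely the decay of $\hat f(\alpha^k\lambda)$ as $|\lambda|\to\infty$ within the closed sectors bounded by $\Gamma_1$ (and $\Gamma_2$), verify that $\Re(i\alpha^k\lambda x)\to-\infty$ (equivalently $e^{i\lambda x}$ paired with the argument of $\hat f$ decays) in exactly those directions, and confirm that the boundary conditions $f(0)=0$ (resp.\ $f(0)=f'(0)=0$) supply the extra algebraic decay needed for the arcs at infinity to contribute nothing. Keeping careful account of the orientation of each $\Gamma_j$ and of where the analyticity of $\hat f$ genuinely holds (the extension to $\C^+$) is the delicate bookkeeping, but the conceptual content is just Cauchy's theorem plus Fourier inversion.
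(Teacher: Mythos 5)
Your proposal is correct and follows essentially the same route as the paper's own proof: identify the $\Gamma_1$ (and $\Gamma_2$) integrands as combinations of $\hat f(\alpha\lambda)$, $\hat f(\alpha^2\lambda)$, use the boundary conditions via integration by parts to get the extra algebraic decay, kill those sectorial integrals by analyticity plus Jordan's lemma, then deform $\Gamma_0$ onto $\R$ and invoke classical Fourier inversion. The only cosmetic difference is that no genuine interchange of integration order is needed (linearity of the inner integral already identifies $F[f]$ with $\zeta_1/2\pi$ and $\hat f/2\pi$), but this does not affect the argument.
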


\begin{proof}
The definition of the transform pair~\eqref{eqn:introTrans.1.1a}--\eqref{eqn:introTrans.1.1c} implies
\BE \label{eqn:Transforms.valid:LKdV:prop2:proof.1}
f[F[f]](x) = \frac{1}{2\pi}\int_{\Gamma_1} e^{i\lambda x} \zeta_1(\lambda) \d\lambda + \frac{1}{2\pi}\int_{\Gamma_0} e^{i\lambda x} \hat{f}(\lambda) \d\lambda,
\EE
where $\zeta$ is given by equation~\eqref{eqn:introIBVP.Zeta.1} and the contours $\Gamma_1$ and $\Gamma_0$ are shown in figure~\ref{fig:3o-cont}.

As $\lambda\to\infty$ from within the closed sector $\{\lambda:\frac{\pi}{3}\leq\arg(\lambda)\frac{2\pi}{3}\}$, the exponentials $e^{-i\alpha\lambda}$ and $e^{-i\alpha^2\lambda}$ are bounded. Integration by parts and the boundary condition yields $\hat{f}(\alpha\lambda)$, $\hat{f}(\alpha^2\lambda) = O(\lambda^{-2})$ and these Fourier transforms are holomorphic in the same sector. Hence, by Jordan's lemma, the integral over $\Gamma_1$ vanishes. The integrand $e^{i\lambda x}\hat{f}(\lambda)$ is holomorphic hence admits a deformation of the contour $\Gamma_0$ onto $\R$. The validity of the usual Fourier transform on $\mathcal{S}[0,\infty)$ completes the proof.

The proof for the transform pair~\eqref{eqn:introTrans.1.1a}--\eqref{eqn:introTrans.1.1bb},~\eqref{eqn:introTrans.1.1d} and~\eqref{eqn:introTrans.1.1e} is similar.
\end{proof}

\subsection{General case: definition of transform pair} \label{ssec:Transforms.valid:LKdV:General}

\subsubsection*{Spatial differential operator}

Let $n\geq2$ and $N\in\{n/2,(n-1)/2,(n+1)/2\}$ be integers. Let $B_j:C\to\mathbb{C}$ be the following linearly independent boundary forms
\BE
B_j\phi = \sum_{k=0}^{n-1} \M{b}{j}{k}\phi^{(k)}(0), \quad j\in\{1,2,\ldots,N\},
\EE
with boundary coefficients $\M{b}{j}{k} \in \R$. The integer $N$ is defined by equation~\eqref{eqn:defn.N}. Let
\BE
\Phi=\{\phi\in C:B_j\phi=0\hsforall j\in\{1,2,\ldots,N\}\}
\EE
and let $\{B_j^\star:j\in\{1,2,\ldots,n-N\}\}$ be a set of adjoint boundary forms with adjoint boundary coefficients $\Msup{b}{j}{k}{\star} \in \R$. Let $S:\Phi\to C$ be the differential operator defined by
\BE \label{eqn:defnS}
S\phi(x)=(-i)^n\frac{\d^n\phi}{\d x^n}(x).
\EE
Then $S$ is formally self-adjoint but, in general, does not admit a self-adjoint extension because, in general, $B_j\neq B_j^\star$. Indeed, adopting the notation
\BE
[\phi\psi](x) = (-i)^n\sum_{j=0}^{n-1}(-1)^j\left(\phi^{(n-1-j)}(x)\overline{\psi}^{(j)}(x)\right),
\EE
of~\cite[section~11.1]{CL1955a} and integrating by parts, we find
\BE \label{eqn:S.not.s-a}
((-i\d/\d x)^n\phi,\psi) = - [\phi\psi](0) + (\phi,(-i\d/\d x)^n\psi), \qquad \hsforall \phi,\psi\in C.
\EE
If $\phi\in\Phi$, then $\psi$ must satisfy the adjoint boundary conditions in order for $[\phi\psi](0) = 0$.

\subsubsection*{Initial-boundary value problem}

Associated with $S$ and the constant $a\in\C$, we define the following homogeneous initial-boundary value problem:
\begin{subequations} \label{eqn:IBVP}
\begin{align} \label{eqn:IBVP.PDE}
(\partial_t + aS)q(x,t) &= 0 & \hsforall (x,t) &\in (0,\infty)\times(0,T), \\ \label{eqn:IBVP.IC}
q(x,0) &= f(x) & \hsforall x &\in [0,\infty), \\ \label{eqn:IBVP.BC}
q(\cdot,t) &\in \Phi & \hsforall t &\in [0,T],
\end{align}
\end{subequations}
where $f\in\Phi$ is arbitrary.
Such a problem is ill-posed if (but not only if) the exponential time dependence is unbounded for $\lambda\in\R$, which poses restrictions on $a$. Avoiding this cause of ill-posedness is equivalent~\cite{FS1999a} to requiring: if $n$ is odd then $a=\pm i$ and if $n$ is even then $\Re(a)\geq0$.

For such a problem to be well-posed, it is necessary and sufficient~\cite{FS1999a} that
\BE \label{eqn:defn.N}
N = \begin{cases}n/2 & n \mbox{ even,} \\ (n+1)/2 & n \mbox{ odd, } a = i, \\ (n-1)/2 & n \mbox{ odd, } a = -i.\end{cases}
\EE
Note that by well-posed, we mean that there exists a unique solution; we make no claims regarding the continuous dependence of the solution on the data.

In the sequel, we develop a spectral theory of the differential operators associated with well-posed IBVP $(S,a)$ of the form~\eqref{eqn:IBVP}.

\subsubsection*{Transform pair}

Let $\alpha = e^{2\pi i/n}$. We define
\BE \label{eqn:M.defn}
\M{M}{k}{j}(\lambda) = \sum_{r=0}^{n-1} (-i\alpha^{k-1}\lambda)^r \Msup{b}{j}{r}{\star}.
\EE
Then the $(n-N)\times(n-N)$ matrix $M(\lambda)$ is an analogue of Birkhoff's adjoint characteristic matrix~\cite{Bir1908b} for the one-point differential operator $S$.

For example,
\BE
M(\lambda) = \BP 1 & -i\lambda \\ 1 & -i\alpha\lambda \EP, \qquad M(\lambda) = \BP1\EP,
\EE
in problems~1 and~2 respectively, the latter being a $1\times1$ matrix.

\begin{defn}
We define 
\begin{itemize}
\item
the polynomial $\Delta (\lambda)$ as the determinant of $M$:
$$\Delta(\lambda) = \det M(\lambda);$$
\item
the $(n-N-1)\times(n-N-1)$ matrix $\Msups{X}{}{}{l}{j}$ as  the submatrix of $M$ with $(1,1)$ entry the $(l+1,j+1)$ entry of the $(2(n-N))\times(2(n-N))$ matrix
\BE
\BP M & M \\ M & M \EP.
\EE
If $N=n-1$, as is the case in problem~2, we adopt the convention that $\Msups{X}{}{}{1}{1}$ is a $0\times0$ matrix with determinant $1$. If $N=n-2$, as is the case in problem~1, then we can simplify
\BE
\det\Msups{X}{}{}{l}{j}(\lambda) = \M{M}{3-l}{3-j}(\lambda).
\EE
\end{itemize}
\end{defn}

We also choose a number $R>0$ such that the open disc $B(0,R)$ contains all zeros of $\Delta$. 

\begin{defn} \label{defn:TransformPair.General}
The transform pair is given by
\begin{subequations} \label{eqn:defn.forward.transform}
\begin{align}
f(x) &\mapsto F[f](\lambda): & F[f](\lambda) &= F_k[f](\lambda), \qquad \lambda\in\Gamma_k,\quad k\in\{0,1,\ldots,N\} \\ \label{eqn:defn.inverse.transform.2}
F(\lambda) &\mapsto f[F](x): & f[F](x) &= \int_{\Gamma} e^{i\lambda x} F(\lambda) \d\lambda, \qquad x\in[0,\infty),
\end{align}
\end{subequations}
where, for $\lambda\in\C$ such that $\Delta(\lambda)\neq0$,
\begin{subequations} \label{eqn:defn.Fpm.rho}
\begin{align}
F_0[f](\lambda) &= \frac{1}{2\pi} \int_0^\infty e^{-i\lambda x} f(x)\d x, \\ \notag
F_k[f](\lambda) &= \frac{1}{2\pi\Delta(\alpha^{N+1-k}\lambda)} \sum_{l=1}^{n-N}\sum_{j=1}^{n-N} (-1)^{(n-N-1)(l+j)} \det \Msups{X}{}{}{l}{j}(\alpha^{N+1-k}\lambda) \\ \label{eqn:defn.Fk}
&\hspace{15ex} \M{M}{1}{j}(\lambda) \int_0^\infty \exp\left(-i\alpha^{N+l-k}\lambda x\right) f(x)\d x,
\end{align}
\end{subequations}
for $k\in\{1,2,\ldots,N\}$ and the contours are defined by
\begin{subequations}
\begin{align}
\Gamma       &= \bigcup_{k=0}^N\Gamma_k, \\
\Gamma_0     &= \begin{matrix}\R \mbox{ perturbed along a semicircular contour of}\\\mbox{ radius $\delta$ above } 0, \mbox{ with \emph{positive} orientation,}\end{matrix} \\
\delta &> 0      \mbox{  arbitrarily small, and} \\
\Gamma_k     &= \begin{matrix}\mbox{the $k$\Th connected component of } \partial(\{\lambda\in\C^+:\Re(a\lambda^n)<0,|\lambda|>R\}), \\ \mbox{ counting anticlockwise from } \R^+, \mbox{ with \emph{negative} orientation,}\end{matrix}
\end{align}
\end{subequations}
for $k\in\{1,2,\ldots,N\}$.
\end{defn}

For problem~1, $\Delta(\alpha\lambda)=-i\lambda(\alpha^2-\alpha)$, and equation~\eqref{eqn:defn.Fk} simplifies to the original definition of $F_1$, equations~\eqref{eqn:introTrans.1.1a} and~\eqref{eqn:introTrans.1.1c}. For problem~2, $\Delta(\alpha^2\lambda)=\Delta(\alpha\lambda)=1$ and equation~\eqref{eqn:defn.Fk} immediately simplifies to give the expected definitions of $F_1$ and $F_2$.

\begin{figure}
\begin{center}
\includegraphics{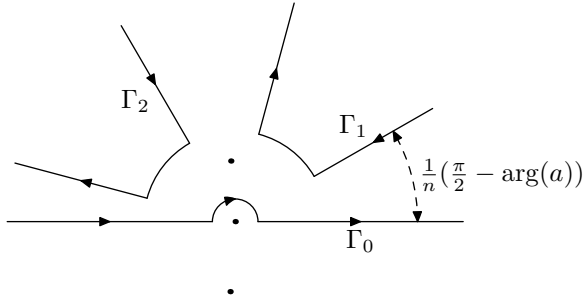}
\caption{Definition of the contour $\Gamma$.}
\label{fig:general-contdef}
\end{center}
\end{figure}

The contours $\Gamma_j$ for problems~1 and~2 are shown on figure~\ref{fig:3o-cont}. Figure~\ref{fig:general-contdef} shows the position of the contours for the problem $(S,e^{-i\frac{\pi}{6}})$, where $n=4$ and the boundary forms are
\BE
B_1\phi = \phi'''(0)+3\phi''(0), \qquad B_2\phi = \phi'(0)-2\phi(0).
\EE
As the boundary conditions are of Robin type, the characteristic determinant $\Delta$ has nonzero zeros. Indeed, $\Delta(\lambda)=0$ at the dots in figure~\ref{fig:general-contdef} (there is a double zero at zero) but $R=4$ is sufficient to ensure $\Delta\neq0$ outside the disc $B(0,R)$.

\subsection{General case: validity of transform pair} \label{ssec:GeneralValidity}

\begin{prop} \label{prop:Transforms.valid:General:2}
Let $S$ be an operator corresponding to a well-posed initial-boundary value problem and let $(F[\cdot],f[\cdot])$ be the transform pair given by definition~\ref{defn:TransformPair.General}. Then for all $f\in\Phi$ and for all $x\in(0,\infty)$,
\BE \label{eqn:Transforms.valid:General:prop2}
f[F[f]](x) = \sum_{k=0}^N\int_{\Gamma_k} e^{i\lambda x} F_k[f](\lambda) \d\lambda = f(x).
\EE
\end{prop}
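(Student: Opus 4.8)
The plan is to follow the template of the proof of Proposition~\ref{prop:Transforms.valid:LKdV:2}: dispatch the $k=0$ term by classical Fourier inversion, and show that every term with $k\geq1$ vanishes by closing its contour at infinity. Substituting~\eqref{eqn:defn.Fpm.rho} into the inverse transform~\eqref{eqn:defn.inverse.transform.2} I would first write
\[
f[F[f]](x) = \frac{1}{2\pi}\int_{\Gamma_0} e^{i\lambda x}\hat{f}(\lambda)\,\d\lambda + \sum_{k=1}^{N}\int_{\Gamma_k} e^{i\lambda x}F_k[f](\lambda)\,\d\lambda ,
\]
where each $F_k[f]$ with $k\geq1$ is, by~\eqref{eqn:defn.Fk}, a finite sum of rotated Fourier transforms $\hat{f}(\alpha^{N+l-k}\lambda)$ weighted by the rational factor $\det X^{l,j}(\alpha^{N+1-k}\lambda)\,M_{1,j}(\lambda)/\Delta(\alpha^{N+1-k}\lambda)$. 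Since $f\in\Phi\subset C$ is compactly supported, $\hat{f}$ is entire, so the $k=0$ integrand is holomorphic on a neighbourhood of the bump in $\Gamma_0$; deforming $\Gamma_0$ down onto $\R$ is therefore immediate, and the resulting integral is the inverse Fourier transform of the zero-extension of $f$, returning $f(x)$ for every $x>0$ by the classical inversion theorem. The whole proposition thus reduces to proving that each $\int_{\Gamma_k}e^{i\lambda x}F_k[f]\,\d\lambda$ vanishes.

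For fixed $k\in\{1,\dots,N\}$ I would close $\Gamma_k$ with an arc at infinity inside the region $D_k=\{\lambda\in\C^+:\Re(a\lambda^n)<0,\ |\lambda|>R\}$ whose boundary it is. Two facts are needed. First, \emph{holomorphy}: throughout $D_k$ one has $|\lambda|>R$, hence $|\alpha^{N+1-k}\lambda|>R$, and since $R$ was chosen so that all zeros of $\Delta$ lie in $B(0,R)$, the denominator $\Delta(\alpha^{N+1-k}\lambda)$ never vanishes there, so the integrand is holomorphic on $D_k$. Second, \emph{decay on the closing arc}: on $D_k\subset\C^+$ we have $\Re(i\lambda x)=-x\,\Im\lambda\leq0$, so $e^{i\lambda x}$ is bounded and, away from $\R$, exponentially decaying; moreover the index assignment is arranged precisely so that each rotation $\alpha^{N+l-k}$ maps $D_k$ into the closed lower half-plane, whence $|e^{-i\alpha^{N+l-k}\lambda y}|\leq1$ for all $y\geq0$ and every factor $\hat{f}(\alpha^{N+l-k}\lambda)$ is bounded. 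Granting the requisite decay, Cauchy's theorem and Jordan's lemma give $\int_{\Gamma_k}e^{i\lambda x}F_k[f]\,\d\lambda=0$, exactly as for the LKdV contours.

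The crux, and the step I expect to be hardest, is the decay on the portions of $\Gamma_k$ that run along $\R$ — the semi-infinite components singled out by~\eqref{eqn:Contours.for.type.I}. There $\Im\lambda\to0$, so $e^{i\lambda x}$ provides no decay and everything must come from the $\hat{f}$ factors, each of which is only $O(\lambda^{-1})$ in general. The needed extra decay must be extracted from the \emph{combination} of terms rather than from any single one. The device is integration by parts, which yields, in the closed lower half-plane, the expansion $\hat{f}(\mu)=\sum_{r=0}^{m-1} f^{(r)}(0)(i\mu)^{-r-1}+O(\mu^{-m})$ for every $m$. Substituting this into $F_k[f]$ and using that $M$ is assembled from the adjoint boundary forms, the boundary data $f^{(r)}(0)$ enter only through the linear combinations annihilated by the conditions $B_j f=0$ defining $\Phi$; the leading, non-decaying contributions therefore cancel and the integrand acquires enough decay along the $\R$-adjacent rays for Jordan's lemma to apply. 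Verifying this cancellation — essentially that the adjugate structure of the matrices $X^{l,j}$ together with $M_{1,j}$ reproduces the boundary forms, which is the algebraic heart of the unified transform — is the delicate point; the well-posedness value of $N$ in~\eqref{eqn:defn.N} is exactly what guarantees the correct number of sectors $D_k$ and boundary conditions for the matching to close.
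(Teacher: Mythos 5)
Your first two paragraphs reproduce the paper's proof: the $k=0$ term is handled by deforming $\Gamma_0$ onto $\R$ and invoking classical Fourier inversion, and each $k\geq1$ term is annihilated by closing $\Gamma_k$ into the sector $D_k$ that it bounds (the paper's ``region lying to the right of $\Gamma_k$''), using holomorphy there (all zeros of $\Delta$ lie in $B(0,R)$) and the fact that the rotations $\alpha^{N+l-k}$ carry $\overline{D_k}$ into the closed lower half-plane, so that each $\hat f(\alpha^{N+l-k}\lambda)$ is $O(\lambda^{-1})$ after one integration by parts. The only point you leave tacit that the paper states is that the rational weights $\det \Msups{X}{}{}{l}{j}(\alpha^{N+1-k}\lambda)\M{M}{1}{j}(\lambda)/\Delta(\alpha^{N+1-k}\lambda)$ are not merely holomorphic but \emph{bounded} on $\overline{D_k}$; without that, the $O(\lambda^{-1})$ decay of the Fourier factors could be eaten by polynomial growth of the weights.

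Your third paragraph, however, misdiagnoses a difficulty and then leaves it unresolved, so that your write-up presents itself as incomplete at a step that is in fact unnecessary. No enhanced decay, and no cancellation among the terms of \eqref{eqn:defn.Fk}, is needed on the components of $\Gamma_k$ lying on $\R$. Close the truncated contour $\Gamma_k\cap\{|\lambda|\leq r\}$ with the arc $A_r=\{|\lambda|=r\}\cap\overline{D_k}$; Cauchy's theorem kills the closed contour, and the arc contribution is bounded by
\BE
\sup_{A_r}\left|F_k[f]\right|\; r\int_0^\pi e^{-xr\sin\theta}\,\d\theta \;\leq\; \frac{\pi}{x}\,\sup_{A_r}\left|F_k[f]\right|,
\EE
by Jordan's inequality, which is designed precisely to control the ends of the arc approaching $\R$, where $|e^{i\lambda x}|=1$. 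Thus uniform $O(r^{-1})$ decay of $F_k[f]$ on $\overline{D_k}$ --- which you already have without invoking $B_jf=0$ --- suffices whether or not $D_k$ abuts the real axis, and the same limit yields convergence of the improper integrals along the $\R$-adjacent rays. (Note also that the terms $f^{(r)}(0)(i\mu)^{-r-1}$ in your expansion all decay; there are no ``non-decaying contributions'' to cancel.) This is exactly how the paper argues: its general proof claims only $O(\lambda^{-1})$. The cancellation you describe is real --- for problem~1 the condition $f(0)=0$ upgrades the decay to $O(\lambda^{-2})$, as used in proposition~\ref{prop:Transforms.valid:LKdV:2} --- but it is a bonus, not a necessity. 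Where the distinction you worry about genuinely matters is in section~\ref{sec:Spectral}: semi-infinite components of $\Gamma_k$ on $\R$ obstruct the \emph{type~I} augmented eigenfunction property (proposition~\ref{prop:GEIntComplete3}(ii), theorem~\ref{thm:Diag.5}), because there the remainder is a polynomial with no decay whatsoever; they are harmless for the validity of the transform pair. Deleting your third paragraph and inserting the arc estimate above leaves a complete proof, identical in substance to the paper's.
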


This is a direct generalisation of proposition~\ref{prop:Transforms.valid:LKdV:2}; its proof follows that of the earlier proposition with an equivalent application of Jordan's lemma.

\begin{proof}
For $k\in\{1,2,\ldots,N\}$, we consider the integral
\begin{multline} \label{eqn:Transforms.valid:General:2.proof:1}
\int_{\Gamma_k} e^{i\lambda x} F_k[f](\lambda) \d\lambda = \frac{1}{2 \pi}  \int_{\Gamma_k} e^{i\lambda x} \sum_{l=1}^{n-N} \int_0^\infty \exp\left(-i\alpha^{N+l-k}\lambda y\right) f(y)\d y \\
\left [\sum_{j=1}^{n-N}(-1)^{(n-N-1)(l+j)} \frac{\det \Msups{X}{}{}{l}{j}(\alpha^{N+1-k}\lambda) \M{M}{1}{j}(\lambda)}{\Delta(\alpha^{N+1-k}\lambda)} \right] \d\lambda.
\end{multline}
The square bracket represents a meromorphic function, which is holomorphic and bounded on $\Gamma_k$ and the region lying to the right of $\Gamma_k$. The inner integral is entire and decaying like $O(\lambda^{-1})$ as $\lambda\to\infty$ along $\Gamma_k$ in either direction or from within the sector to the right of $\Gamma_k$. Hence, by Jordan's lemma, integral~\eqref{eqn:Transforms.valid:General:2.proof:1} evaluates to $0$. Note: we are `closing' the contour $\Gamma_k$ by moving it to the right; see figure~\ref{fig:general-contdef}.

The integrand $e^{i\lambda x}F_0[f](\lambda)$ is holomorphic hence $\Gamma_0$ may be deformed onto $\R$ and, by the validity of the usual Fourier transform, the transform pair is valid:
\BE
f[F[f]](x) = \sum_{k=0}^N\int_{\Gamma_k} e^{i\lambda x} F_k[f](\lambda)\d\lambda = \int_{\R} e^{i\lambda x}F_0[f](\lambda)\d \lambda = f(x).
\EE
\end{proof}

\section{Fokas' unified transform method for IBVP} \label{sec:Transform.Method}

In section~\ref{ssec:Transform.Method:LKdV} we prove equation~\eqref{eqn:introIBVP.solution.transform.1} for the transform pairs~\eqref{eqn:introTrans.1.1}. In section~\ref{ssec:Transform.Method:LKdV:General}, we establish equivalent results for general well-posed initial-boundary value problems.

\subsection{Linearized KdV} \label{ssec:Transform.Method:LKdV}

\begin{prop} \label{prop:Transform.Method:LKdV:2}
The solution of problem~1 is given by equation~\eqref{eqn:introIBVP.solution.transform.1}, with $F[f](\lambda)$ and $f[F](x)$ defined by equations~\emph{\eqref{eqn:introTrans.1.1a}--\eqref{eqn:introTrans.1.1c}}.

The solution of problem~2 is given by equation~\eqref{eqn:introIBVP.solution.transform.1}, with $F[f](\lambda)$ and $f[F](x)$ defined by equations~\eqref{eqn:introTrans.1.1a}--\eqref{eqn:introTrans.1.1bb},~\eqref{eqn:introTrans.1.1d} and~\eqref{eqn:introTrans.1.1e}.
\end{prop}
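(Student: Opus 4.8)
The plan is to identify the transform representation \eqref{eqn:introIBVP.solution.transform.1} with the Fokas integral representation \eqref{eqn:introIBVP.solution.2}, which \cite{Fok1997a,FS1999a} already establish to be the unique solution of each problem. First I would unfold the inverse transform in \eqref{eqn:introIBVP.solution.transform.1} using \eqref{eqn:introTrans.1.1b},
\[
f\left[e^{i\lambda^3 t}F[f]\right](x) = \sum_{j} \int_{\Gamma_j} e^{i\lambda x}\, e^{i\lambda^3 t}\, F_j[f](\lambda)\,\d\lambda,
\]
the sum running over $j\in\{0,1\}$ for problem~1 and $j\in\{0,1,2\}$ for problem~2, where on $\Gamma_j$ the datum $F(\lambda)=e^{i\lambda^3 t}F_j[f](\lambda)$.

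Next I would substitute the explicit forward transforms on each contour. On $\Gamma_0$ one has $F_0[f](\lambda)=\frac{1}{2\pi}\hat f(\lambda)$ directly from \eqref{eqn:introTrans.1.1bb}. On $\Gamma_1$ (problem~1), the kernel $\phi^1$ in \eqref{eqn:introTrans.1.1c} gives $F_1[f](\lambda)=\frac{-1}{2\pi}[\alpha\hat f(\alpha\lambda)+\alpha^2\hat f(\alpha^2\lambda)]=\frac{1}{2\pi}\zeta_1(\lambda)$ by \eqref{eqn:introIBVP.Zeta.1}; for problem~2, \eqref{eqn:introTrans.1.1d}--\eqref{eqn:introTrans.1.1e} give $F_1[f]=\frac{1}{2\pi}\hat f(\alpha^2\lambda)=\frac{1}{2\pi}\zeta_1$ and $F_2[f]=\frac{1}{2\pi}\hat f(\alpha\lambda)=\frac{1}{2\pi}\zeta_2$ by \eqref{eqn:introIBVP.DeltaZeta.2}. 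Term by term, the right-hand side then reproduces exactly \eqref{eqn:introIBVP.solution.2}, so \eqref{eqn:introIBVP.solution.transform.1} is the known solution.

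To make the argument self-contained rather than merely quoting \eqref{eqn:introIBVP.solution.2}, I would instead verify the three defining conditions of the IBVP. The initial condition is immediate: at $t=0$ the time exponential is $1$, so $q(x,0)=f[F[f]](x)=f(x)$ for $x\in(0,\infty)$ by proposition~\ref{prop:Transforms.valid:LKdV:2}. The PDE follows by differentiating under the integral sign, each kernel $e^{i\lambda x + i\lambda^3 t}$ satisfying the dispersion relation attached to the problem; the differentiation is justified by the same decay and boundedness estimates (integration by parts using $f(0)=0$, resp.\ $f(0)=f'(0)=0$, together with Jordan's lemma) that appear in the proof of proposition~\ref{prop:Transforms.valid:LKdV:2}.

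The main obstacle is the boundary condition. Verifying $q(0,t)=0$ for problem~1, and $q(0,t)=q_x(0,t)=0$ for problem~2, directly from the representation requires deforming the contours $\Gamma_j$ and exploiting the algebraic structure of the $\zeta_j$ under $\lambda\mapsto\alpha\lambda$ --- in effect, Fokas' global relation. This is precisely the step encoded in the well-posedness analysis of \cite{Fok1997a,FS1999a}, so I would discharge it through the matching argument of the first two paragraphs; the only genuine computation that remains is then the routine term-by-term identification of the two representations.
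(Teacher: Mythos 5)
Your first route is logically sound, but it is genuinely different from the paper's proof and buys much less. You reduce the proposition to the representation \eqref{eqn:introIBVP.solution.2}, imported wholesale from \cite{Fok1997a,FS1999a}; the term-by-term identification is indeed correct (unfolding \eqref{eqn:introTrans.1.1b} and substituting $F_0[f]=\frac{1}{2\pi}\hat f$, $F_1[f]=\frac{1}{2\pi}\zeta_1$, and for problem~2 also $F_2[f]=\frac{1}{2\pi}\zeta_2$ reproduces \eqref{eqn:introIBVP.solution.2} exactly, since the transform pair was defined by reading off that representation). The paper, by contrast, quotes the references only for well-posedness: it assumes $q$ is a (smooth) solution, applies the \emph{forward} transform to $q(\cdot,t)$, uses the PDE and integration by parts to get $\frac{\d}{\d t}F[q(\cdot,t)](\lambda)=i\lambda^3F[q(\cdot,t)](\lambda)+{}$boundary terms, integrates in $t$, inverts via proposition~\ref{prop:Transforms.valid:LKdV:2}, and then eliminates the \emph{unknown} boundary data $Q_1,Q_2$ (the $t$-transforms of $q_x(0,\cdot)$ and $q_{xx}(0,\cdot)$, which are not prescribed) by Jordan's lemma, deforming $\Gamma_1$ onto $\Gamma_0$ and using $-\alpha-\alpha^2=1$, while the prescribed Dirichlet datum gives $Q_0(0,\lambda)=0$. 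That elimination of unknown boundary values is precisely the analytical content your matching argument outsources to the references; it is also what makes the paper's argument generalise (proposition~\ref{prop:Transform.Method:General:2} and lemma~\ref{lem:GEInt1} run the same scheme when there is no explicit formula to quote) and what handles inhomogeneous boundary data, as the paper notes immediately after the proof.

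Your second, ``self-contained'' route has genuine gaps beyond the one you concede. The boundary-condition step is not the only obstacle: the PDE step is not routine either. With only $f(0)=0$ (problem~1), integration by parts gives $\hat f(\alpha\lambda),\hat f(\alpha^2\lambda)=O(\lambda^{-2})$ and generically no better, so differentiating three times in $x$ under the integral sign produces integrands growing like $\lambda^{3}\cdot\lambda^{-2}=\lambda$ on contours where $e^{i\lambda^3t}$ is merely oscillatory; the separately differentiated integrals are not absolutely convergent, so ``the kernel satisfies the dispersion relation'' does not by itself justify the interchange, and the estimates in the proof of proposition~\ref{prop:Transforms.valid:LKdV:2} (which concern $t=0$) do not supply it. Since you ultimately discharge both the PDE and the boundary steps back to the matching argument, your proposal stands or falls with the first route: acceptable as a citation-level argument, but it proves the proposition only by quoting, for problems~1 and~2, essentially the statement being proved, rather than exhibiting the mechanism the paper's proof is designed to display.
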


\begin{proof}
We present the proof for problem~1. The proof for problem~2 is very similar.

Suppose $q(x,t)$, for which $t\mapsto q(\cdot,t)$ is a $C^\infty$ map from $[0,T]$ into $C$, is a solution of the problem~\eqref{eqn:introIBVP.2}. Applying the forward transform to $q$ yields
\BE
F[q(\cdot,t)](\lambda) = \begin{cases} \int_0^\infty \phi^1(x,\lambda)q(x,t)\d x & \mbox{if } \lambda\in\Gamma_1, \\ \int_0^\infty \phi^0(x,\lambda)q(x,t)\d x & \mbox{if } \lambda\in\Gamma_0, \end{cases}
\EE
where $\phi^1$, $\phi^0$ are given by equations~\eqref{eqn:introTrans.1.1c} and~\eqref{eqn:introTrans.1.1bb}. The PDE and integration by parts imply
\begin{align*}
\frac{\d}{\d t} F[q(\cdot,t)](\lambda) &= \int_0^\infty \phi^k(x,\lambda)q_{xxx}(x,t)\d x \\
&= \partial_{x}^2q(0,t) \phi^k(0,\lambda) - \partial_{x}q(0,t) \partial_{x}\phi^k(0,\lambda) \\
&\hspace{10em} + q(0,t) \partial_{xx}\phi^k(0,\lambda) + i\lambda^3 F[q(\cdot,t)](\lambda).
\end{align*}
Rearranging, multiplying by $e^{-i\lambda^3t}$, integrating over $t$ and applying the initial condition, we find
\BE
F[q(\cdot,t)](\lambda) = e^{i\lambda^3t}F[f](\lambda) + e^{i\lambda^3t} \sum_{j=0}^2 (-1)^j \partial_{x}^{2-j}\phi^k(0,\lambda) Q_j(0,\lambda),
\EE
where
\BE
Q_j(x,\lambda) = \int_0^t e^{-i\lambda^3s} \partial_x^j q(x,s) \d s.
\EE
Evaluating $\partial_{x}^{j}\phi^k(0,\lambda)$, we obtain
\begin{multline}
F[q(\cdot,t)](\lambda) = e^{i\lambda^3t}F[f](\lambda) + \frac{e^{i\lambda^3t}}{2\pi} \left[ - Q_0(0,\lambda)2\lambda^2 + Q_1(0,\lambda)(\alpha+\alpha^2) i\lambda \right. \\
\left. + Q_2(0,\lambda)(\alpha+\alpha^2) \right],
\end{multline}
for all $\lambda\in\Gamma_1$ and
\BE
F[q(\cdot,t)](\lambda) = e^{i\lambda^3t}F[f](\lambda) + \frac{e^{i\lambda^3t}}{2\pi} \left[ - Q_0(0,\lambda)\lambda^2 + Q_1(0,\lambda) i\lambda + Q_2(0,\lambda) \right],
\EE
for all $\lambda\in\Gamma_0$.

Hence, the validity of the transform pair, proposition~\ref{prop:Transforms.valid:LKdV:2}, implies
\begin{multline} \label{eqn:Transform.Method:LKdV.2:q.big}
q(x,t) = \left\{\int_{\Gamma_1}+\int_{\Gamma_0}\right\} e^{i\lambda x+i\lambda^3t} F[f](\lambda)\d\lambda \\
+\frac{1}{2\pi} \int_{\Gamma_1} e^{i\lambda x+i\lambda^3t} \left[ - Q_0(0,\lambda)2\lambda^2 \right] \d\lambda + \frac{1}{2\pi} \int_{\Gamma_0} e^{i\lambda x+i\lambda^3t} \left[ - Q_0(0,\lambda)\lambda^2 \right] \d\lambda \\
+ \frac{-\alpha-\alpha^2}{2\pi} \int_{\Gamma_1} e^{i\lambda x+i\lambda^3t} \left[ Q_2(0,\lambda) + Q_1(0,\lambda) i\lambda \right] \d\lambda \\
+ \frac{1}{2\pi} \int_{\Gamma_0} e^{i\lambda x+i\lambda^3t} \left[ Q_2(0,\lambda) + Q_1(0,\lambda) i\lambda \right] \d\lambda.
\end{multline}
Integration by parts yields
\BE
Q_j(x,\lambda) = O(\lambda^{-3}),
\EE
as $\lambda\to\infty$ within the sectors $0\leq\arg\lambda\leq\pi/3$ and $2\pi/3\leq\arg\lambda\leq\pi$. Further, the integrands on the third and fourth lines of equation~\eqref{eqn:Transform.Method:LKdV.2:q.big} are entire. Hence, by Jordan's lemma (used to `open' the contour $\Gamma_1$ to the left until it coincides with $\Gamma_0$ but with opposite orientation) and noting $-\alpha-\alpha^2=1$, the integrands on the third and fourth lines cancel. The boundary conditions imply
\BE
Q_0(0,\lambda) = 0,
\EE
so the second line of equation~\eqref{eqn:Transform.Method:LKdV.2:q.big} vanishes. Hence
\BE
q(x,t) = \left\{\int_{\Gamma_1}+\int_{\Gamma_0}\right\} e^{i\lambda x+i\lambda^3t} F[f](\lambda)\d\lambda.
\EE
\end{proof}

The above proof also demonstrates how the transform pair may be used to solve a problem with inhomogeneous boundary conditions: consider the problem
\begin{subequations} \label{eqn:introIBVP.2inhomo}
\begin{align} \label{eqn:introIBVP.2inhomo:PDE}
q_t(x,t) + q_{xxx}(x,t) &= 0 & (x,t) &\in (0,\infty)\times(0,T), \\
q(x,0) &= \phi(x) & x &\in [0,\infty), \\
q(0,t) &= h(t) & t &\in [0,T],
\end{align}
\end{subequations}
for some given Dirichlet boundary datum $h\in C^\infty[0,T]$ compatible with $f$. Then $Q_0(0,\lambda)$ is nonzero but is a known quantity, namely the $t$-transform of the boundary datum. Substituting this value into equation~\eqref{eqn:Transform.Method:LKdV.2:q.big} yields an explicit expression for the solution.

\subsection{General case} \label{ssec:Transform.Method:LKdV:General}

\begin{prop} \label{prop:Transform.Method:General:2}
The solution of a well-posed initial-boundary value problem is given by
\BE \label{eqn:Transform.Method:General:prop2:1}
q(x,t) = f\left[ e^{-a\lambda^nt} F[f] \right](x),
\EE
where $(F[\cdot],f[\cdot])$ is the transform pair of definition~\ref{defn:TransformPair.General}.
\end{prop}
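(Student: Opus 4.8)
The plan is to reproduce the argument of proposition~\ref{prop:Transform.Method:LKdV:2} at the level of the general $n$\Th order operator. Well-posedness supplies a unique solution $q$, for which we may take $t\mapsto q(\cdot,t)$ to be a smooth map into $C$; it therefore suffices to show that any such solution coincides with $f[e^{-a\lambda^n t}F[f]](x)$. First I would apply the forward transform of definition~\ref{defn:TransformPair.General}, recalling that each $F_k[q(\cdot,t)](\lambda)$ is a fixed polynomial-in-$\lambda$ weighted combination of integrals $\int_0^\infty \exp(-i\alpha^{N+l-k}\lambda x)\,q(x,t)\d x$, $l=1,\ldots,n-N$. Differentiating in $t$, replacing $q_t$ by $-aSq=-a(-i)^n\partial_x^n q$ via the PDE, and integrating by parts $n$ times, I would use $\alpha^n=1$ to collapse $(i\alpha^{N+l-k}\lambda)^n=i^n\lambda^n$ and $(-i)^n i^n=1$ to produce the scalar factor $-a\lambda^n$. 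This yields, for each $k$, the first-order linear ODE
$$\frac{\d}{\d t}F_k[q(\cdot,t)](\lambda) = -a\lambda^n F_k[q(\cdot,t)](\lambda) + G_k(\lambda,t),$$
whose inhomogeneity $G_k(\lambda,t)$ is a combination, with coefficients polynomial in $\lambda$, of the boundary values $\partial_x^{n-1-j}q(0,t)$, $j=0,\ldots,n-1$.

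Next I would solve the ODE using the integrating factor $e^{a\lambda^n t}$ and the initial condition $F_k[q(\cdot,0)]=F_k[f]$, giving
$$F_k[q(\cdot,t)](\lambda) = e^{-a\lambda^n t}F_k[f](\lambda) + e^{-a\lambda^n t}\sum_{j} c_{k,j}(\lambda)\,Q_j(\lambda),$$
where $Q_j(\lambda)=\int_0^t e^{a\lambda^n s}\partial_x^j q(0,s)\d s$ plays the role of the $Q_j$ evaluated at $x=0$ in the LKdV proof. Applying the inverse transform $f[\cdot]$ and invoking the validity of the transform pair, proposition~\ref{prop:Transforms.valid:General:2}, the first term reassembles into exactly $f[e^{-a\lambda^n t}F[f]](x)$. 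It then remains to prove that the boundary contribution
$$\sum_{k=0}^{N}\int_{\Gamma_k} e^{i\lambda x}e^{-a\lambda^n t}\sum_{j} c_{k,j}(\lambda)\,Q_j(\lambda)\d\lambda = 0 \qquad \hsforall x\in(0,\infty).$$

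This vanishing is achieved by the two mechanisms already visible in the LKdV proof. Integration by parts in $s$ gives $Q_j(\lambda)=O(\lambda^{-n})$ as $\lambda\to\infty$ within the relevant sectors, so that $e^{-a\lambda^n t}Q_j(\lambda)$ is bounded on each $\Gamma_k$ (where $\Re(a\lambda^n)=0$) and decays into the neighbouring region where $\Re(a\lambda^n)>0$. The boundary conditions $q(\cdot,t)\in\Phi$, i.e.\ $B_j q(\cdot,t)=0$ for $j=1,\ldots,N$, annihilate the $N$ combinations of boundary data corresponding to the imposed conditions, generalising the identities $Q_0(0,\lambda)=0$ (and, in problem~2, $Q_1(0,\lambda)=0$). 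The remaining $n-N$ pieces, carrying the unknown boundary data, are cancelled by deforming the contours away from the sectors they bound and exploiting the rotational symmetry $\lambda\mapsto\alpha^{\pm1}\lambda$ relating adjacent sectors; the coefficients $c_{k,j}$, built from $M(\lambda)$, $\Delta(\lambda)$ and the minors $\det X^{l,j}$, are designed precisely so that the unknown-data contributions cancel in the sum over $k=0,1,\ldots,N$.

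I expect the main obstacle to be this final cancellation of the unknown boundary data. In the third-order examples it reduced to the one-line identity $-\alpha-\alpha^2=1$ pairing the $\Gamma_1$ and $\Gamma_0$ integrands; in general one must verify, using the transformation properties of the adjoint characteristic matrix $M$ and its minors under $\lambda\mapsto\alpha\lambda$, that the full sum over the $n-N$ sectors collapses. Care is also needed because, unlike the LKdV case where $\Delta$ vanishes only at the origin, $\Delta$ may have nonzero roots (as in the Robin example of figure~\ref{fig:general-contdef}); one must check that deforming the contours does not enclose these poles, which is exactly why $R$ was fixed with each $\Gamma_k$ outside $B(0,R)$ and $\Gamma_0$ indented around $0$. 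The analytic ingredients (Jordan's lemma and the decay of $Q_j$) are routine; the genuine work is the determinantal bookkeeping, which is the half-line incarnation of the global relation of Fokas' unified transform method.
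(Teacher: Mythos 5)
You have correctly reproduced the paper's overall route: transform the PDE into $\frac{\d}{\d t}F_k[q(\cdot,t)](\lambda)=-a\lambda^nF_k[q(\cdot,t)](\lambda)+G_k(\lambda,t)$, solve with the integrating factor $e^{a\lambda^nt}$, invoke proposition~\ref{prop:Transforms.valid:General:2} to reassemble the first term into $f[e^{-a\lambda^nt}F[f]](x)$, and then argue that the boundary-data contribution integrates to zero over $\Gamma$. But the step you defer as ``the genuine work \ldots\ the determinantal bookkeeping'' is precisely the content of the paper's proof, which it isolates as lemma~\ref{lem:GEInt1}: for $f\in\Phi$, $F_k[Sf](\lambda)-\lambda^nF_k[f](\lambda)=P_f(\lambda)$, where $P_f$ is a \emph{single polynomial of degree at most $n-1$, independent of $k$} (including $k=0$). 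The paper proves this by writing the integration-by-parts boundary terms as the Lagrange form $-[f\phi^k_\lambda](0)=Bf\cdot B_c^\star\phi^k_\lambda+B_cf\cdot B^\star\phi^k_\lambda$; the imposed conditions kill $Bf$, and the cofactor identity
\BE
\sum_{l=1}^{n-N}(-1)^{(n-N-1)(l+j)}\det\Msups{X}{}{}{l}{j}(\alpha^{N+1-k}\lambda)\,\M{M}{l}{r}(\alpha^{N+1-k}\lambda)=\Delta(\alpha^{N+1-k}\lambda)\M{\delta}{j}{r}
\EE
collapses $\overline{B_r^\star\phi^k_\lambda}$ to $\frac{1}{2\pi}\M{M}{1}{r}(\lambda)$ for every $k$. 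This is the general form of the LKdV identity $-\alpha-\alpha^2=1$ that you cite. Your proposal asserts that some such cancellation must occur (``the coefficients $c_{k,j}$ \ldots\ are designed precisely so that the unknown-data contributions cancel in the sum over $k$'') but supplies no mechanism for proving it, so the argument is incomplete exactly at its crux.

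Once the lemma is in hand, two of your stated worries resolve in ways worth noting. First, the cancellation is purely geometric rather than an algebraic sector-by-sector collapse: every $\Gamma_k$ carries literally the same entire integrand $e^{i\lambda x-a\lambda^nt}\int_0^te^{a\lambda^ns}P_{q(\cdot,s)}(\lambda)\d s$, so when Jordan's lemma is used to rotate the semi-infinite components of each $\Gamma_k$ onto those of $\Gamma_{k\pm1}$ or $\Gamma_0$, they annihilate in pairs (same integrand, opposite orientation), and what survives is the closed bounded contour $\partial\left[\left(D(0,R)\setminus D(0,\delta)\right)\cap\C^+\right]$, around which the integral of an entire function vanishes by Cauchy's theorem. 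Second, your concern about the nonzero roots of $\Delta$ obstructing the deformation is moot for the same reason: the rational functions cancel out of the remainder completely --- $P_f$ is a polynomial, hence entire --- so no poles can be crossed; the zeros of $\Delta$ affect only $F_k[f]$ itself, which is why $R$ enters the definition of the contours, not this vanishing argument.
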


The principal tool in the proof of proposition~\ref{prop:Transform.Method:General:2} is the following lemma.

\begin{lem} \label{lem:GEInt1}
Let $f\in\Phi$ and $S$ be the differential operator defined in equation~\eqref{eqn:defnS}. Then there exists a polynomial $P_f$ of degree at most $n-1$ such that, for all $k\in\{0,1,\ldots,N\}$,
\BE
F_k[Sf](\lambda) = \lambda^n F_k[f](\lambda) + P_f(\lambda),
\EE
with $P_f$ independent of $k$.
\end{lem}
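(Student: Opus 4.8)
The plan is to reduce the computation of every $F_k[Sf]$ to that of the elementary half-line integrals
\BES
G_c[f](\lambda) = \int_0^\infty e^{-ic\lambda x} f(x) \d x, \qquad c\in\{\alpha^m:m\in\Z\}.
\EES
Indeed $F_0[f] = \frac{1}{2\pi}G_1[f]$, and by~\eqref{eqn:defn.Fk} each $F_k[f]$ with $k\geq1$ is a finite combination $\sum_{l=1}^{n-N} A_{k,l}(\lambda)\,G_{c_{k,l}}[f]$, where $c_{k,l} = \alpha^{N+l-k}$ and the $f$-independent coefficients are
\BES
A_{k,l}(\lambda) = \frac{1}{2\pi\Delta(\alpha^{N+1-k}\lambda)}\sum_{j=1}^{n-N}(-1)^{(n-N-1)(l+j)}\det\Msups{X}{}{}{l}{j}(\alpha^{N+1-k}\lambda)\,\M{M}{1}{j}(\lambda).
\EES
Since $S$ enters each term only through the kernel $e^{-ic\lambda x}$, the whole statement reduces to understanding $G_c[Sf]$.

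First I would integrate by parts $n$ times. Because $f\in\Phi\subseteq C$ is compactly supported, the boundary contributions at $x=+\infty$ vanish, and each integration contributes a factor $ic\lambda$ together with a boundary term at $x=0$, giving $G_c[Sf](\lambda) = (-i)^n(ic\lambda)^n G_c[f](\lambda) - (-i)^n\sum_{r=0}^{n-1}(ic\lambda)^r f^{(n-1-r)}(0)$. The two arithmetic facts $(-i)^n(i)^n = 1$ and $c^n = \alpha^{mn}=1$ (from $\alpha^n=1$) force the leading coefficient to be $\lambda^n$ for \emph{every} admissible $c$, so that
\BES
G_c[Sf](\lambda) = \lambda^n G_c[f](\lambda) + B_c(\lambda), \qquad B_c(\lambda) = -(-i)^n\sum_{r=0}^{n-1}(ic\lambda)^r f^{(n-1-r)}(0),
\EES
with $B_c$ a polynomial in $\lambda$ of degree at most $n-1$ depending only on the boundary data $f^{(j)}(0)$. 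For $k=0$ this is already the assertion, with $P_f = \frac{1}{2\pi}B_1$.

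For $k\in\{1,\ldots,N\}$, linearity and the uniform leading term give $F_k[Sf](\lambda) = \lambda^n F_k[f](\lambda) + \rho_k(\lambda)$, where $\rho_k := \sum_l A_{k,l}B_{c_{k,l}}$. I would then argue that $\rho_k$ is a polynomial of degree at most $n-1$ as follows. A degree count (the numerator of $A_{k,l}$ and $\Delta$ both have degree $(n-N)(n-1)$, which is also why the square bracket in proposition~\ref{prop:Transforms.valid:General:2} is bounded) shows $A_{k,l}=O(1)$, hence $\rho_k=O(\lambda^{n-1})$ at infinity. The only finite poles of $\rho_k$ are at zeros of $\Delta(\alpha^{N+1-k}\lambda)$, and since $B_c$ is entire the residue there is a linear functional of the boundary data $f^{(j)}(0)$; I would show this functional lies in the span of the boundary forms $B_j$, so that $f\in\Phi$ forces it to vanish and $\rho_k$ is entire. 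An entire function of growth $O(\lambda^{n-1})$ is a polynomial of degree at most $n-1$.

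The hard part is the $k$-independence, i.e.\ that this polynomial equals $\frac{1}{2\pi}B_1$ for every $k$. Unlike the reduction above, this genuinely needs the boundary conditions: a direct check for problem~1 shows the coefficients of the \emph{free} boundary values already agree across $k$, whereas the coefficient of the \emph{constrained} value $f(0)$ does not, and it is precisely $f\in\Phi$ that removes the offending, $k$-dependent contribution. I would exploit the cyclic structure $\M{M}{k}{j}(\lambda)=\M{M}{1}{j}(\alpha^{k-1}\lambda)$ and recognise the $j$-sums above as cofactor/minor combinations of the adjoint characteristic matrix $M$, so that at each zero of $\Delta$ the functional $B_{c_{k,l}}(\lambda_0)$ collapses—via these minors—onto a combination of the $B_j$ annihilating $\Phi$; the same identities then match what survives with $\frac{1}{2\pi}B_1$ and with the boundary concomitant $[\,\cdot\,]$ of~\eqref{eqn:S.not.s-a}. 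Establishing this determinantal identity, and the bookkeeping tying it to the constraints cutting out $\Phi$, is the main obstacle; the integration by parts and the $\alpha^n=1$ collapse are routine by comparison.
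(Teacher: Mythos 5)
Your opening reduction is sound and coincides with the first step of the paper's own argument: writing each $F_k$ as a combination of the kernels $G_c$, integrating by parts $n$ times, and using $(-i)^n(ic)^n=1$ for $c^n=1$ to get the uniform eigenvalue $\lambda^n$; the $k=0$ case and your problem-1 sanity check (free boundary values match across $k$, the constrained value $f(0)$ does not) are also correct. But beyond that point the proposal asserts, rather than proves, exactly the two claims that constitute the lemma: (i) that the rational function $\rho_k=\sum_l A_{k,l}B_{c_{k,l}}$ has vanishing residues at the zeros of $\Delta(\alpha^{N+1-k}\,\cdot\,)$ whenever $f\in\Phi$, hence is entire; and (ii) that the resulting polynomial is independent of $k$ and equals $\frac{1}{2\pi}B_1$. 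You write ``I would show this functional lies in the span of the boundary forms'' for (i) and explicitly defer (ii) as ``the main obstacle.'' Since the identity $F_k[Sf]=\lambda^nF_k[f]+(\text{some }\lambda\text{-dependent remainder})$ is automatic from integration by parts, (i) and (ii) \emph{are} the lemma, and leaving them as a programme means the statement is not proved. (A smaller inaccuracy: your degree count is wrong in general --- for problem~2 one has $\Delta\equiv1$ while $(n-N)(n-1)=2$; what saves the growth estimate is that $A_{k,l}$ is rational and bounded to the right of $\Gamma_k$, hence $O(1)$ in every direction --- but in a correct proof no growth estimate is needed at all.)

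The missing device, which is how the paper closes both gaps in one stroke, is to never expand the boundary remainder in the raw data $f^{(j)}(0)$. Write $F_k[f](\lambda)=(f,\phi^k_\lambda)$; then by the concomitant identity~\eqref{eqn:S.not.s-a} the remainder is exactly $-[f\phi^k_\lambda](0)$, and the Coddington--Levinson theory of complementary boundary forms gives the decomposition~\eqref{eqn:propGEInt.3}, $-[f\phi^k_\lambda](0) = Bf \cdot B_c^\star \phi^k_\lambda + B_cf \cdot B^\star \phi^k_\lambda$. For $f\in\Phi$ the first term --- which carries all the poles --- vanishes identically because $Bf=0$, and the second term is evaluated by the cofactor (adjugate) identity
\BE
\sum_{l=1}^{n-N} (-1)^{(n-N-1)(l+j)} \det\Msups{X}{}{}{l}{j}(\mu)\,\M{M}{l}{r}(\mu) = \Delta(\mu)\,\M{\delta}{j}{r}, \qquad \mu=\alpha^{N+1-k}\lambda,
\EE
which collapses $\overline{B_r^\star\phi^k_\lambda}$ to $\frac{1}{2\pi}\M{M}{1}{r}(\lambda)$ --- manifestly a polynomial of degree at most $n-1$, visibly independent of $k$, and agreeing with the direct computation at $k=0$. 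Hence $P_f=\sum_r (B_cf)_r\,\frac{1}{2\pi}\M{M}{1}{r}$, and entirety, the degree bound, and $k$-independence all follow simultaneously. This change of basis from $\{f^{(j)}(0)\}$ to $(Bf,B_cf)$ together with the adjugate identity is precisely the ``determinantal identity and bookkeeping'' you identified but did not carry out; without it, neither your residue argument nor your $k$-matching goes through.
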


If it held that $P_f=0$ then this lemma would simply state that each $F_k[\cdot](\lambda)$ was a generalised eigenfunction of $S$ and proposition~\ref{prop:Transform.Method:General:2} would follow by proposition~\ref{prop:Transforms.valid:General:2}. Although lemma~\ref{lem:GEInt1} is weaker than $P_f=0$, the $o(\lambda^n)$ as $\lambda\to\infty$ bound on $P_f$ is sufficient to give proposition~\ref{prop:Transform.Method:General:2}. That is, we will be able to show that $f [ e^{-a\lambda^nt} P_f ] (x) = 0$.

\begin{proof}[Proof of Lemma~\ref{lem:GEInt1}]
Let $(f,g)$ be the usual inner product $\int_0^\infty f(x)\bar{g}(x)\d x$. For any $\lambda\in\Gamma$, we can represent $F_k[\cdot](\lambda)$ as the inner product $F_k[f](\lambda)=(f,\phi^k_\lambda)$, for the function $\phi^k_\lambda(x)$, rational in $\lambda$ and smooth in $x$, defined by
\begin{subequations} \label{eqn:defn.fpm.rho}
\begin{align}
\overline{\phi^0_\lambda}(x) &= \frac{1}{2\pi} e^{-i\lambda x}, \\ \notag
\overline{\phi^k_\lambda}(x) &= \frac{1}{2\pi\Delta(\alpha^{N+1-k}\lambda)} \sum_{l=1}^{n-N}\sum_{j=1}^{n-N} (-1)^{(n-N-1)(l+j)} \det \Msups{X}{}{}{l}{j}(\alpha^{N+1-k}\lambda) \\
&\hspace{31ex} \M{M}{1}{j}(\lambda) \exp(-i\alpha^{N+l-k}\lambda x),
\end{align}
\end{subequations}
for $k\in\{1,2,\ldots,N\}$.

If $\lambda\in\Gamma_k$, then $\phi^k_\lambda$ is a smooth and bounded function of $x$. Also, $Sf\in C$ and $\alpha^{(l-1)n}=1$, so equation~\eqref{eqn:S.not.s-a} yields
\BE
F_k[Sf](\lambda) = \lambda^n F_k[f](\lambda) - [f \phi^k_\lambda](0).
\EE
If $B$, $B^\star:C\to\C^n$, are the vector boundary forms
\BE
B=(B_1,B_2,\ldots,B_N), \qquad B^\star=(B_1^\star,B_2^\star,\ldots,B_{n-N}^\star),
\EE
then there exist complimentary vector boundary forms $B_c$, $B_c^\star$ such that
\BE \label{eqn:propGEInt.3}
- [f \phi^k_\lambda](0) = Bf \cdot B_c^\star \phi^k_\lambda + B_cf \cdot B^\star \phi^k_\lambda,
\EE
where $\cdot$ is the usual sesquilinear dot product of vectors. This follows by considering the finite-interval case~\cite[chapter~11]{CL1955a} and taking the limit $($length of interval$)\to\infty$ and imposing compact support (rapid decay is sufficient). We consider the right hand side of equation~\eqref{eqn:propGEInt.3} as a function of $\lambda$. As $Bf=0$, this expression is a linear combination of the functions $\overline{B_r^\star\phi^k_\lambda}$ of $\lambda$, with coefficients given by the complementary boundary forms.

The definitions of $\overline{B_r^\star}$ and $\phi_\lambda^k$ imply, for $k\geq1$,
\begin{align} \notag
\overline{B_r^\star\phi^k_\lambda} &= \frac{1}{2\pi\Delta(\alpha^{N+1-k}\lambda)} \sum_{l=1}^{n-N}\sum_{j=1}^{n-N} (-1)^{(n-N-1)(l+j)} \det \Msups{X}{}{}{l}{j}(\alpha^{N+1-k}\lambda) \\
&\hspace{25ex} \M{M}{1}{j}(\lambda) \overline{B_r^\star}\left(\exp\left(-i\alpha^{N+l-k}\lambda \; \cdot \; \right)\right) \\ \notag
&= \frac{1}{2\pi\Delta(\alpha^{N+1-k}\lambda)} \sum_{l=1}^{n-N}\sum_{j=1}^{n-N} (-1)^{(n-N-1)(l+j)} \det \Msups{X}{}{}{l}{j}(\alpha^{N+1-k}\lambda) \\
&\hspace{25ex} \M{M}{1}{j}(\lambda) \M{M}{l}{r}(\alpha^{N+1-k}\lambda).
\end{align}
But
\BE
\sum_{l=1}^{n-N} (-1)^{(n-N-1)(l+j)} \det\Msups{X}{}{}{l}{j}(\alpha^{N+1-k}\lambda)\M{M}{l}{r}(\alpha^{N+1-k}\lambda) = \Delta(\alpha^{N+1-k}\lambda)\M{\delta}{j}{r},
\EE
so
\begin{align}
\overline{B_r^\star \phi^k_\lambda} &= \frac{1}{2\pi}\M{M}{1}{r}(\lambda). \\
\intertext{By definition,} \label{eqn:Pf.k0}
\overline{B_r^\star \phi^0_\lambda} &= \frac{1}{2\pi}\M{M}{1}{r}(\lambda).
\end{align}
Finally, by equations~\eqref{eqn:M.defn}, $\M{M}{1}{r}$ is a polynomial of order at most $n-1$.
\end{proof}

\begin{proof}[Proof of Proposition~\ref{prop:Transform.Method:General:2}]
Let $q$ be the solution of the initial-boundary value problem. Then, since $q$ satisfies the partial differential equation~\eqref{eqn:IBVP.PDE}, for each $k\in\{0,1,\ldots,N\}$,
\BE
\frac{\d}{\d t} F_k[q(\cdot,t)](\lambda) = -aF_k[S(q(\cdot,t))](\lambda) = -a\lambda^n F_k[q(\cdot,t)](\lambda) - a P_{q(\cdot,t)}(\lambda),
\EE
where, by lemma~\ref{lem:GEInt1}, $P_{q(\cdot,t)}$ is a polynomial of degree at most $n-1$ independent of $k$. Hence
\BE
\frac{\d}{\d t} \left( e^{a\lambda^nt}F_k[q(\cdot,t)](\lambda) \right) = -ae^{a\lambda^nt}P_{q(\cdot,t)}(\lambda).
\EE
Integrating with respect to $t$ and applying the initial condition~\eqref{eqn:IBVP.IC}, we find
\BE \label{eqn:Transform.Method:General:1proof.1}
F_k[q(\cdot,t)](\lambda) = e^{-a\lambda^nt}F_k[f](\lambda) -a e^{-a\lambda^nt} \int_0^t e^{a\lambda^ns} P_{q(\cdot,s)}(\lambda)\d s.
\EE
The validity of the transform pair, proposition~\ref{prop:Transforms.valid:General:2}, implies
\BE \label{eqn:Transform.Method:General:2proof.3}
q(x,t) = \sum_{k=0}^N\int_{\Gamma_k} e^{i\lambda x-a\lambda^nt}F_k[f](\lambda) \d\lambda
- a \sum_{k=0}^N\int_{\Gamma_k} e^{i\lambda x-a\lambda^nt} \left(\int_0^t e^{a\lambda^ns} P_{q(\cdot,s)}(\lambda) \d s\right) \d\lambda.
\EE
If $t=0$, the latter integrand is $0$ and the result holds. Otherwise, the latter integrand is entire and integration by parts yields
\BE
e^{i\lambda x-a\lambda^nt} \left(\int_0^t e^{a\lambda^ns} P_{q(\cdot,s)}(\lambda) \d s\right) = O(\lambda^{-1}) \mbox{ as } \lambda\to\infty
\EE
within the closed sectors $\{\lambda\in\C^+:\Re(a\lambda^n)\geq0\}$.
Hence, by Jordan's lemma applied to the set
\BE
\C^+\setminus\bigcup_{k=1}^N\{\lambda \mbox{ lying to the left of } \Gamma_k\},
\EE
the latter integral of equation~\eqref{eqn:Transform.Method:General:2proof.3} vanishes.

Indeed, for each $k\in\{1,2,\ldots,N\}$, we use Jordan's lemma to `open' each contour $\Gamma_k$ by rotating the semi-infinite components about $0$ until they coincide with semi-infinite components of $\Gamma_{k+1}$, $\Gamma_{k-1}$ or $\Gamma_0$ with opposite orientation but the same integrand. Thus the contributions of the semi-infinite components of $\Gamma_k$ mutually annihilate and we are left with
\BE
\int_\gamma e^{i\lambda x-a\lambda^nt} \left(\int_0^t e^{a\lambda^ns} P_{q(\cdot,s)}(\lambda) \d s\right) \d\lambda,
\EE
where $\gamma$ is the contour
\BE
\partial \left[ \left( D(0,R) \setminus D(0,\delta) \right) \cap \C^+ \right],
\EE
with positive orientation. The integrand is entire, so the integral vanishes.
\end{proof}

\section{Analysis of the transform pair} \label{sec:Spectral}

In this section we analyse the spectral properties of the transform pairs using the notion of augmented eigenfunctions.

\subsection{Linearized KdV} \label{ssec:Spectral:LKdV}
The main results in this section are the following.

\begin{thm} \label{thm:Diag:LKdV:1}
The transform pairs $(F[\cdot],f[\cdot])$ defined by equations~\emph{\eqref{eqn:introTrans.1.1a}--\eqref{eqn:introTrans.1.1c}} and defined by equations~\emph{\eqref{eqn:introTrans.1.1a}--\eqref{eqn:introTrans.1.1bb}},~\eqref{eqn:introTrans.1.1d} and~\eqref{eqn:introTrans.1.1e} provide spectral representations of the spatial differential operators associated with problems~1 and~2, respectively, in the sense of definition~\ref{defn:Spect.Rep.II}.
\end{thm}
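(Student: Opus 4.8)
The plan is to verify the three hypotheses of Definition~\ref{defn:Spect.Rep.II} for the system $E=\{F[\cdot](\lambda):\lambda\in\Gamma\}$, with $L=S$ the relevant third-order operator and $n=3$; once these are in place, the spectral representation~\eqref{eqn:Spect.Rep.I} follows at once by integrating the augmented-eigenfunction relation against the kernel $e^{i\lambda x}/\lambda^n$ over $\Gamma$ and using the type~II control condition to annihilate the remainder. Throughout I treat problems~1 and~2 uniformly: the only difference is that $\Gamma=\Gamma_0\cup\Gamma_1$ for problem~1 and $\Gamma=\Gamma_0\cup\Gamma_1\cup\Gamma_2$ for problem~2.

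First I would establish that $E$ is a family of type~II augmented eigenfunctions of $S$. The eigenvalue relation~\eqref{eqn:defnAugEig.AugEig} is precisely Lemma~\ref{lem:GEInt1}: for every $\phi\in\Phi$ and every index $k$ we have $F_k[S\phi](\lambda)=\lambda^3 F_k[\phi](\lambda)+P_\phi(\lambda)$, so the remainder functional is $R[\phi](\lambda)=P_\phi(\lambda)$, a \emph{single} polynomial of degree at most $n-1=2$, independent of $k$. It then remains only to verify the type~II control condition~\eqref{eqn:defnAugEig.Control2}, namely that $\int_\Gamma e^{i\lambda x}\lambda^{-n}P_\phi(\lambda)\d\lambda=0$ for all $x\in(0,\infty)$.

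This control condition is the heart of the matter, and I expect it to be the main obstacle. I would argue by contour deformation, mirroring the proof of Proposition~\ref{prop:Transform.Method:General:2}. Because $\deg P_\phi\leq n-1$, the integrand $e^{i\lambda x}P_\phi(\lambda)/\lambda^n=O(\lambda^{-1})$ as $\lambda\to\infty$ in $\overline{\C^+}$, so for $x>0$ Jordan's lemma permits rotating the semi-infinite rays of each $\Gamma_k$ $(k\geq1)$ about the origin until they coincide, with opposite orientation, with rays of the adjacent contours (including $\Gamma_0$); since here we integrate a single globally-defined meromorphic function, these coincident rays carry the same integrand and annihilate in pairs. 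What survives is the integral over the bounded arc $\gamma=\partial[(D(0,R)\setminus D(0,\delta))\cap\C^+]$. The crucial difference from Proposition~\ref{prop:Transform.Method:General:2}, where the integrand was entire, is that now the integrand is meromorphic with a pole of order up to $n$ at the origin; but that pole lies in the excised disc $D(0,\delta)$, hence the integrand is holomorphic throughout the region bounded by $\gamma$ and Cauchy's theorem gives $\int_\gamma=0$. This yields~\eqref{eqn:defnAugEig.Control2}, so $E$ is a family of type~II augmented eigenfunctions of $S$ with eigenvalue $\lambda^3$.

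Finally I would dispatch the remaining two hypotheses using the already-established validity of the transform pair, Proposition~\ref{prop:Transforms.valid:LKdV:2} (whose boundary hypotheses $f(0)=0$, respectively $f(0)=f'(0)=0$, are exactly the conditions defining $\Phi$). Convergence of $\int_\Gamma e^{i\lambda x}F[\phi](\lambda)\d\lambda$, as required by~\eqref{eqn:Spect.Rep.defnI.conv}, is immediate since that proposition shows the integral equals $\phi(x)$. Completeness in the sense of Definition~\ref{defn:Complete} follows from the same inversion formula: if $F[\phi](\lambda)=0$ for all $\lambda\in\Gamma$ then $\phi(x)=f[F[\phi]](x)=0$ for all $x$, whence $\phi=0$. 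With all hypotheses of Definition~\ref{defn:Spect.Rep.II} verified, integrating Lemma~\ref{lem:GEInt1} against $e^{i\lambda x}/\lambda^n$ and invoking~\eqref{eqn:defnAugEig.Control2} to kill the remainder produces~\eqref{eqn:Spect.Rep.I}, establishing the claimed spectral representation for each of problems~1 and~2.
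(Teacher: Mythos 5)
Your proof is correct, and its outer structure --- verify the type~II property, then obtain convergence and completeness from the inversion formula of Proposition~\ref{prop:Transforms.valid:LKdV:2} --- is the same as the paper's; the last two steps coincide exactly. Where you genuinely diverge is in how the control condition~\eqref{eqn:defnAugEig.Control2} is established. The paper does not pass through Lemma~\ref{lem:GEInt1}: it computes the remainder functionals of $S^1$ and $S^2$ explicitly by integration by parts (equations~\eqref{eqn:introS1.Rphiplus}--\eqref{eqn:introS2.Rphiminus}) and then annihilates the remainder integral \emph{separately on each component}: for $k\geq1$, Jordan's lemma is applied in the sector to the right of $\Gamma_k$, where $R[\phi](\lambda)/\lambda^3$ is pole-free and $O(\lambda^{-1})$; for $\Gamma_0$, it is applied in $\C^+$, the pole at the origin lying \emph{below} the perturbed contour (the point of Remark~\ref{rmk:Gamma0.shape}). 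You instead import the global ``opening'' argument from the proof of Proposition~\ref{prop:Transform.Method:General:2}: rotate rays, cancel coincident rays pairwise, and finish with Cauchy's theorem on the half-annulus bounded by $\gamma$, the entireness of the integrand invoked there being correctly replaced by your observation that the only pole sits inside the excised $\delta$-disc. Both routes work, with a trade-off. Your cancellation step genuinely requires the $k$-independence of $P_\phi$ asserted by Lemma~\ref{lem:GEInt1} --- with $k$-dependent remainders the coincident rays would not annihilate --- whereas the paper's componentwise closing needs no such fact and yields the slightly stronger conclusion that each $\{F[\cdot](\lambda):\lambda\in\Gamma_k\}$ is a type~II family over its own $\Gamma_k$. (A caution here: as printed, equations~\eqref{eqn:introS1.Rphiplus} and~\eqref{eqn:introS1.Rphiminus} show remainders of \emph{opposite} sign, and likewise~\eqref{eqn:introS2.Rphiplus} and~\eqref{eqn:introS2.Rphiminus}, which would break your cancellation; direct computation shows the remainders in fact agree --- for problem~1 both equal $-\tfrac{i}{2\pi}f''(0)+\tfrac{\lambda}{2\pi}f'(0)$ --- so this is a sign slip in the displayed formulas, consistent with Lemma~\ref{lem:GEInt1}, and your reliance on that lemma is sound.) Conversely, your heavier machinery is exactly what is needed in situations where componentwise closing is impossible, as in Proposition~\ref{prop:Transform.Method:General:2} itself, where the time exponential grows to the right of $\Gamma_k$; since no such factor is present in the remainder here, the paper's simpler per-component argument suffices, but yours generalises without modification.
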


\begin{thm} \label{thm:Diag:LKdV:2}
The transform pair $(F[\cdot],f[\cdot])$ defined in~\emph{\eqref{eqn:introTrans.1.1a}--\eqref{eqn:introTrans.1.1c}} provides a spectral representation of the spatial differential operator associated with problem~1 in the sense of definition~\ref{defn:Spect.Rep.I.II}.
\end{thm}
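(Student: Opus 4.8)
The plan is to instantiate Definition~\ref{defn:Spect.Rep.I.II} with $L=S$, $n=3$, $I=(0,\infty)$, and the contour split $\Gamma_{(\mathrm{I})}=\Gamma_1$, $\Gamma_{(\mathrm{II})}=\Gamma_0$, taking $E[\cdot](\lambda)=F[\cdot](\lambda)$ throughout. Lemma~\ref{lem:GEInt1} already supplies the augmented eigenfunction relation~\eqref{eqn:defnAugEig.AugEig}: for every $\phi\in\Phi$,
\[ F[S\phi](\lambda) = \lambda^3 F[\phi](\lambda) + P_\phi(\lambda), \qquad \lambda\in\Gamma, \]
so the common remainder functional is $R[\phi](\lambda)=P_\phi(\lambda)$, a polynomial in $\lambda$ of degree at most $n-1=2$. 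The task thus reduces to verifying one control condition of type~I on $\Gamma_1$, one of type~II on $\Gamma_0$, the two convergence requirements, and completeness; the spectral identities~\eqref{eqn:Spect.Rep.II.1}--\eqref{eqn:Spect.Rep.II.2} will then follow by integrating the displayed relation against $e^{i\lambda x}$ over $\Gamma_1$ and against $e^{i\lambda x}/\lambda^3$ over $\Gamma_0$.

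The crucial and genuinely new step is the type~I control condition~\eqref{eqn:defnAugEig.Control1}: I must show $\int_{\Gamma_1} e^{i\lambda x}P_\phi(\lambda)\d\lambda = 0$ for all $x>0$. Here I exploit that, for problem~1, $\Gamma_1$ is the boundary of the sector $\{\pi/3<\arg\lambda<2\pi/3\}$ perturbed away from the origin, so both of its semi-infinite rays lie strictly inside $\C^+$; this is exactly hypothesis~\eqref{eqn:Contours.for.type.I}. Since $P_\phi$ is a polynomial, the integrand $e^{i\lambda x}P_\phi(\lambda)$ is entire, and on the rays and throughout the enclosed sector one has $\Re(i\lambda x)=-x|\lambda|\sin(\arg\lambda)<0$ for $x>0$, so $e^{i\lambda x}$ decays exponentially and dominates the polynomial growth of $P_\phi$. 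Closing $\Gamma_1$ through the sector with a large circular arc, Jordan's lemma kills the arc contribution while Cauchy's theorem (no enclosed singularity of the entire integrand) gives vanishing of the closed-contour integral; hence $\int_{\Gamma_1} e^{i\lambda x}P_\phi(\lambda)\d\lambda=0$ and $E_{(\mathrm{I})}=\{F[\cdot](\lambda):\lambda\in\Gamma_1\}$ is a family of type~I augmented eigenfunctions. This is precisely the step that fails on $\Gamma_0$: there the rays lie on $\R$, where $e^{i\lambda x}$ does not decay and a polynomial cannot integrate to zero, which forces $\Gamma_0$ to be treated as type~II.

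For the type~II control condition~\eqref{eqn:defnAugEig.Control2} on $\Gamma_0$, effectively already obtained in the proof of Theorem~\ref{thm:Diag:LKdV:1}, the factor $1/\lambda^3$ supplies the missing decay: since $\deg P_\phi\leq 2$, the integrand $e^{i\lambda x}P_\phi(\lambda)/\lambda^3$ is $O(\lambda^{-1})$ at infinity and holomorphic throughout $\C^+$, the only pole at $\lambda=0$ lying below the semicircular indentation of $\Gamma_0$. Closing in $\C^+$ and applying Jordan's lemma gives $\int_{\Gamma_0} e^{i\lambda x}P_\phi(\lambda)/\lambda^3\,\d\lambda=0$, so $E_{(\mathrm{II})}=\{F[\cdot](\lambda):\lambda\in\Gamma_0\}$ is a family of type~II augmented eigenfunctions. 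The convergence of $\int_{\Gamma_1}e^{i\lambda x}F[S\phi](\lambda)\d\lambda$ and $\int_{\Gamma_0}e^{i\lambda x}F[\phi](\lambda)\d\lambda$ then follows from the rapid decay of the forward functionals on their contours for $C$-data (established by integration by parts exactly as in Proposition~\ref{prop:Transforms.valid:LKdV:2}), applied to $S\phi\in C$ and $\phi\in C$; and completeness of $E=E_{(\mathrm{I})}\cup E_{(\mathrm{II})}$ over $\Gamma=\Gamma_0\cup\Gamma_1$ in the sense of Definition~\ref{defn:Complete} is immediate from Proposition~\ref{prop:Transforms.valid:LKdV:2}, since $F[\phi](\lambda)=0$ on all of $\Gamma$ forces $\phi(x)=f[F[\phi]](x)=0$.

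With every hypothesis of Definition~\ref{defn:Spect.Rep.I.II} in hand, I integrate the augmented relation: multiplying by $e^{i\lambda x}$ and integrating over $\Gamma_1$ discards the $P_\phi$ term by the type~I control and yields~\eqref{eqn:Spect.Rep.II.1}, while multiplying by $e^{i\lambda x}/\lambda^3$ and integrating over $\Gamma_0$ discards the $P_\phi/\lambda^3$ term by the type~II control and yields~\eqref{eqn:Spect.Rep.II.2}, completing the proof. The main obstacle is the type~I verification on $\Gamma_1$, and in particular correctly identifying the sector through which the contour may be closed so that Jordan's lemma applies; everything else is either inherited from Theorem~\ref{thm:Diag:LKdV:1} or a direct consequence of the validity of the transform pair.
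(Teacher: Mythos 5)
Your proposal is correct and takes essentially the same route as the paper's own proof: treat $\Gamma_1$ as the type~I piece, killing the polynomial remainder by Jordan's lemma and Cauchy's theorem in the closed sector $\frac{\pi}{3}\leq\arg\lambda\leq\frac{2\pi}{3}$ (where $e^{i\lambda x}$ decays); treat $\Gamma_0$ as the type~II piece using the $1/\lambda^3$ factor and closure in $\C^+$; obtain convergence of $\int_{\Gamma_1}e^{i\lambda x}F[S\phi](\lambda)\d\lambda$ by the argument of proposition~\ref{prop:Transforms.valid:LKdV:2} applied to $S\phi\in C$ (which, as the paper notes, only gives $O(\lambda^{-1})$ decay since $S\phi$ need not satisfy the boundary condition, but this suffices); and deduce completeness from the validity of the transform pair. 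The only cosmetic difference is that you invoke the general lemma~\ref{lem:GEInt1} for the $k$-independent remainder polynomial, whereas the paper's proof uses the explicit integration-by-parts remainders~\eqref{eqn:introS1.Rphiplus}--\eqref{eqn:introS1.Rphiminus}; both yield a polynomial of degree at most $n-1$, so the Jordan's lemma steps coincide.
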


\subsubsection*{Augmented Eigenfunctions}

Let $S^{1}$ and $S^{2}$ be the differential operators representing the spatial parts of the IBVPs~1 and~2, respectively. Each operator is a restriction of the same formal differential operator, $(-i\d/\d x)^3$ to the domain of initial data compatible with the boundary conditions of the problem:
\begin{align} \label{eqn:introS1}
\mathcal{D}(S^{1}) &= \{f\in C: f(0)=0\}, \\ \label{eqn:introS2}
\mathcal{D}(S^{2}) &= \{f\in C: f(0)=f'(0)=0\}.
\end{align}

Integration by parts yields
\begin{align} \label{eqn:introS1.Rphiplus}
F_1[S^{1}f](\lambda) &= \lambda^3 F_1[f](\lambda) + \left( \frac{i}{2\pi}f''(0) - \frac{\lambda}{2\pi}f'(0) \right), \\  \label{eqn:introS1.Rphiminus}
F_0[S^{1}f](\lambda) &= \lambda^3 F_0[f](\lambda) + \left( - \frac{i}{2\pi}f''(0) + \frac{\lambda}{2\pi}f'(0) \right). \\
\intertext{Similarly,} \label{eqn:introS2.Rphiplus}
F_k[S^{2}f](\lambda) &= \lambda^3 F_k[f] + \left( \frac{i}{2\pi}f''(0) \right), \qquad k\in\{1,2\}, \\  \label{eqn:introS2.Rphiminus}
F_0[S^{2}f](\lambda) &= \lambda^3 F_0[f] + \left( - \frac{i}{2\pi}f''(0) \right).
\end{align}
In each case, the remainder functional, which is enclosed in parentheses, is entire in $\lambda$.

The ratios of the remainder functionals to the eigenvalue are rational functions with no pole in the regions to the right of $\Gamma_k$ and decaying as $\lambda\to\infty$. Jordan's lemma applied in the sectors to the right of $\Gamma_k$, $k\geq1$ and in the upper half-plane for $\Gamma_0$ implies~\eqref{eqn:defnAugEig.Control2} hence $\{F_\lambda:\lambda\in\bigcup_{k\geq0}\Gamma_k\}$ is a family of type~II augmented eigenfunctions of the corresponding $S^{1}$ or $S^{2}$.

\begin{rmk} \label{rmk:Gamma0.shape}
Suppose that we wished to work in in $\mathcal{S}[0,\infty)$ directly, instead of the space of compactly-supported functions. Then, in order to establish the validity (or even the definition) of the transform pair one must insist $\Gamma_0\cap\C^+=\emptyset$. It is now clear why we avoid this approach, and choose to deform $\Gamma_0$ away from $0$ and into $\C^+$, not $\C^-$. Indeed, otherwise, applying Jordan's lemma to
\BE
\int_{\Gamma_0}e^{i\lambda x}\frac{1}{\lambda^3}\left( \frac{-i}{2\pi}f''(0) \right)\d \lambda,
\EE
we would pick up a contribution from the pole at zero, hence $\{F[\cdot](\lambda):\lambda\in\Gamma_0\}$ would fail to be a family of type~II augmented eigenfunctions.
\end{rmk}

\subsubsection*{Spectral representation of $S^{2}$ - proof of Theorem \ref{ssec:Spectral:LKdV}}

We have shown above that $\{F[\cdot](\lambda):\lambda\in\bigcup_{k\geq0}\Gamma_k\}$ is a family of type~II augmented eigenfunctions of $S^{2}$ with eigenvalue $\lambda^3$. Moreover, by proposition~\ref{prop:Transforms.valid:LKdV:2},
\BE
\int_{\Gamma_0\cup\Gamma_1\cup\Gamma_2}e^{i\lambda x}F[f](\lambda)\d\lambda
\EE
converges to $f$. This completes the proof of theorem~\ref{thm:Diag:LKdV:1} for problem~2.

\subsubsection*{Spectral representation of $S^{1}$ - proof of Theorem \ref{thm:Diag:LKdV:2}}

By the above argument, it is clear that the transform pair $(F[\cdot],f[\cdot])$ defined by equations~\eqref{eqn:introTrans.1.1a}--\eqref{eqn:introTrans.1.1c} provides a spectral representation of $S^{1}$ in the sense of definition~\ref{defn:Spect.Rep.II}, establishing theorem~\ref{thm:Diag:LKdV:1} for problem~1.

For $x\in(0,\infty)$,
\BE
\int_{\Gamma_1} e^{i\lambda x} \left(\frac{i}{2\pi} f''(0) - \frac{\lambda}{2\pi} f'(0) \right) \d \lambda = \int_{\Gamma_1} e^{i\lambda x/2} \left[ \frac{e^{i\lambda x/2}}{2\pi} \left(i f''(0) - \lambda f'(0) \right) \right] \d \lambda,
\EE
the integrand is entire and the square bracket decays exponentially as $\lambda\to\infty$ from within the closed sector $\frac{\pi}{3}\leq\arg\lambda\leq\frac{2\pi}{3}$. Hence, by Jordan's lemma, the integral converges to $0$. We have shown that $\{F[\cdot](\lambda):\lambda\in\Gamma_1\}$ is a family of type~I augmented eigenfunctions of $S^{1}$.

Note that this holds precisely because, as $\lambda\to\infty$ along any semi-infinite component of $\Gamma_1$, $\Im(\lambda)\to +\infty$. In particular, $\{F[\cdot](\lambda):\lambda\in\Gamma_0\}$ is \emph{not} a family of type~I augmented eigenfunctions of $S^1$. It is clear that $\{F[\cdot](\lambda):\lambda\in\Gamma_0\cup\Gamma_1\}$ is not a family of type~I augmented eigenfunctions, so we cannot provide a spectral representation of $S^1$ using only type~I augmented eigenfunctions. Similarly, neither $\{F[\cdot](\lambda):\lambda\in\Gamma_1\}$ nor $\{F[\cdot](\lambda):\lambda\in\Gamma_2\}$ is a family of type~I augmented eigenfunctions of $S^2$.

Convergence of
\BE
\int_{\Gamma_1}e^{i\lambda x}F[S^1f](\lambda)\d\lambda
\EE
for all $f\in C$ with $f(0)=0$ follows by the argument in the proof of proposition~\ref{prop:Transforms.valid:LKdV:2}, except in this case $S^1f\in C$ but not necessarily $(S^1f)(0)=0$ so we can only guarantee $\hat{f}(\alpha\lambda)$, $\hat{f}(\alpha^2\lambda) = O(\lambda^{-1})$.

This completes the proof of theorem~\ref{thm:Diag:LKdV:2}.

\subsection{General case} \label{ssec:Spectral:General}

We will show that the transform pair $(F[\cdot],f[\cdot])$ represents spectral decomposition by type~II augmented eigenfunctions.

\begin{thm} \label{thm:Diag.2}
Let $S$ be the spatial differential operator associated with a well-posed IBVP. Then the transform pair $(F[\cdot],f[\cdot])$ given by definition~\ref{defn:TransformPair.General} provides a spectral representation of $S$ in the sense of definition~\ref{defn:Spect.Rep.II}.
\end{thm}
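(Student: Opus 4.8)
The plan is to verify the three hypotheses of definition~\ref{defn:Spect.Rep.II} for the system $E=\{F[\cdot](\lambda):\lambda\in\Gamma\}$ and the operator $S$, namely: that $E$ is a family of type~II augmented eigenfunctions of $S$ up to integration over $\Gamma$; that $\int_\Gamma e^{i\lambda x}F[\phi](\lambda)\d\lambda$ converges for all $\phi\in\Phi$ and all $x\in(0,\infty)$; and that $E$ is complete in the sense of definition~\ref{defn:Complete}. Once these are in hand, the spectral representation identity~\eqref{eqn:Spect.Rep.I} follows by substituting the augmented-eigenfunction relation $F_k[S\phi](\lambda)=\lambda^n F_k[\phi](\lambda)+P_\phi(\lambda)$ into $\int_\Gamma \lambda^{-n}e^{i\lambda x}F[S\phi](\lambda)\d\lambda$, splitting the (now convergent) integral into the two natural pieces, and discarding the remainder piece by the type~II control condition~\eqref{eqn:defnAugEig.Control2}.

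Establishing the type~II property is the core step. Here I would invoke lemma~\ref{lem:GEInt1}, which furnishes for every $\phi\in\Phi$ a single polynomial $P_\phi$ of degree at most $n-1$, independent of $k$, with $F_k[S\phi](\lambda)=\lambda^n F_k[\phi](\lambda)+P_\phi(\lambda)$. Thus the remainder functional is $R[\phi](\lambda)=P_\phi(\lambda)$, and I must show $\int_{\Gamma_k}\lambda^{-n}e^{i\lambda x}P_\phi(\lambda)\d\lambda=0$ for each $k$ and each $x>0$. The function $\lambda^{-n}P_\phi(\lambda)$ decays like $O(\lambda^{-1})$ at infinity and its only singularity is the pole at the origin, which lies inside $B(0,R)$. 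This generalizes the explicit cubic computation of section~\ref{ssec:Spectral:LKdV}: for each $k\geq1$, close $\Gamma_k$ into the region to its right, where $\lambda^{-n}P_\phi(\lambda)$ is holomorphic (the pole at $0$ being shielded by the arc $|\lambda|=R$) and where $e^{i\lambda x}$ decays as $\Im\lambda\to+\infty$, so that Jordan's lemma annihilates the integral; for $\Gamma_0$, close into $\C^+$, where $e^{i\lambda x}$ decays for $x>0$ and where the pole at $0$ is avoided precisely because $\Gamma_0$ is perturbed above the origin (cf.\ remark~\ref{rmk:Gamma0.shape}).

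The remaining two hypotheses are inherited directly from the validity of the transform pair. Proposition~\ref{prop:Transforms.valid:General:2} asserts $\sum_{k=0}^N\int_{\Gamma_k}e^{i\lambda x}F_k[\phi](\lambda)\d\lambda=\phi(x)$ for all $\phi\in\Phi$ and $x>0$; since $\Gamma=\bigcup_{k=0}^N\Gamma_k$ and $F[\phi]=F_k[\phi]$ on $\Gamma_k$, this is exactly the required convergence of $\int_\Gamma e^{i\lambda x}F[\phi](\lambda)\d\lambda$. The same identity yields completeness, for if $F[\phi](\lambda)=0$ for all $\lambda\in\Gamma$ then $\phi(x)=\int_\Gamma e^{i\lambda x}F[\phi](\lambda)\d\lambda=0$. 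With all three hypotheses verified, definition~\ref{defn:Spect.Rep.II} applies and the theorem follows.

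I expect the main obstacle to be the Jordan's lemma estimate underlying the type~II property in the general case: one must confirm, for arbitrary order $n$ and arbitrary admissible boundary conditions, that the region to the right of each $\Gamma_k$ is simultaneously free of the origin and a region of exponential decay for $e^{i\lambda x}$, so that the contour closure is legitimate. This is geometrically transparent for the cubic examples but in general requires the structural description of the $\Gamma_k$ as boundary components of $\{\lambda\in\C^+:\Re(a\lambda^n)<0,\,|\lambda|>R\}$ together with the choice of $R$ enclosing all zeros of $\Delta$; the decay of $\lambda^{-n}P_\phi(\lambda)$ supplies only the polynomial gain, while $e^{i\lambda x}$ supplies the exponential gain on the correct side of each contour.
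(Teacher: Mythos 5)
Your proposal is correct and takes essentially the same approach as the paper: the type~II property is established exactly as in part~(i) of proposition~\ref{prop:GEIntComplete3} (lemma~\ref{lem:GEInt1} plus Jordan's lemma applied to the region to the right of each $\Gamma_k$ for $k\geq1$ and in $\C^+$ for $\Gamma_0$), while convergence and completeness are both read off the inversion identity of proposition~\ref{prop:Transforms.valid:General:2}. The only cosmetic differences are that the paper obtains completeness by evaluating the solution formula of proposition~\ref{prop:Transform.Method:General:2} at $t=0$ (the same identity you invoke directly), and that your explicit handling of the pole of $\lambda^{-n}P_\phi(\lambda)$ at the origin is, if anything, more careful than the paper's wording, which loosely calls that function entire.
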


Let $(S,a)$ be such that the associated initial-boundary value problem is well-posed. Then there exists a complete system of augmented eigenfunctions associated with $S$. The augmented eigenfunctions are all of type~\textup{II}. However, in certain cases, some of the augmented eigenfunctions are also of type~\textup{I}.

\begin{prop} \label{prop:GEIntComplete3}
For each $k\in\{0,1,2,\ldots,N\}$, we define the system of functionals
\BE
\mathcal{F}_k = \{F_k[\cdot](\lambda):\lambda\in\Gamma_k\}.
\EE
\begin{enumerate}
\item[(i)]{For each $k\in\{0,1,2,\ldots,N\}$, $\mathcal{F}_k$ is a family of type~\textup{II} augmented eigenfunctions of $S$ up to integration over $\Gamma_k$, with eigenvalues $\lambda^n$.}
\item[(ii)]{If either $n$ is odd and $a=-i$ or $n$ is even and $\Re(a)>0$, then, for each $k\in\{1,2,\ldots,N\}$, $\mathcal{F}_k$ is a family of type~\textup{I} augmented eigenfunctions of $S$ up to integration over $\Gamma_k$, with eigenvalues $\lambda^n$.}
\item[(iii)]{If the initial-boundary value problem $(S,a)$ is well-posed, then $\mathcal{F}=\bigcup_{k=0}^N \mathcal{F}_k$ is a complete system.}
\end{enumerate}
\end{prop}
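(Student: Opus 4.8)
The plan is to establish the three parts of Proposition~\ref{prop:GEIntComplete3} by combining the integration-by-parts computation already present in the proof of Lemma~\ref{lem:GEInt1} with contour-deformation (Jordan's lemma) arguments of the kind used in Proposition~\ref{prop:Transforms.valid:General:2}. Part~(i) is essentially a restatement of Lemma~\ref{lem:GEInt1} cast in the language of Definition~\ref{defn:Aug.Eig}: from that lemma we have $F_k[Sf](\lambda)=\lambda^n F_k[f](\lambda)+P_f(\lambda)$ with $\deg P_f\leq n-1$, so the remainder functional is $R[f](\lambda)=P_f(\lambda)$. To verify the type~II control condition~\eqref{eqn:defnAugEig.Control2}, I would note that $P_f(\lambda)/\lambda^n=O(\lambda^{-1})$ as $\lambda\to\infty$, and that this ratio is holomorphic in the region lying to the right of $\Gamma_k$ (for $k\geq1$) or in the upper half-plane (for $\Gamma_0$, where the deformation of $\Gamma_0$ into $\C^+$ as in Remark~\ref{rmk:Gamma0.shape} is exactly what keeps the pole at $0$ off the relevant region). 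Applying Jordan's lemma by closing the contour into that region then gives $\int_{\Gamma_k}e^{i\lambda x}\lambda^{-n}P_f(\lambda)\d\lambda=0$ for each $x\in I$, which is precisely~\eqref{eqn:defnAugEig.Control2}.

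For part~(ii), the additional hypothesis ($n$ odd with $a=-i$, or $n$ even with $\Re(a)>0$) is the analytic translation of geometric condition~\eqref{eqn:Contours.for.type.I}: it guarantees that each $\Gamma_k$ with $k\geq1$ has no semi-infinite component lying along $\R$, so that along every ray of $\Gamma_k$ we have $\Im(\lambda)\to+\infty$. I would mimic the $S^1$ computation given after Theorem~\ref{thm:Diag:LKdV:2}: writing the type~I integrand as $e^{i\lambda x}P_f(\lambda)$, I split $e^{i\lambda x}=e^{i\lambda x/2}\cdot e^{i\lambda x/2}$ and observe that the bracketed factor $e^{i\lambda x/2}P_f(\lambda)$ decays exponentially as $\lambda\to\infty$ within the relevant closed sector, precisely because $\Im(\lambda)\to+\infty$ forces $|e^{i\lambda x/2}|=e^{-x\Im(\lambda)/2}\to0$ and this exponential decay dominates the polynomial $P_f$. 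Jordan's lemma then yields~\eqref{eqn:defnAugEig.Control1}, establishing the type~I property. I would emphasise that this argument fails on $\Gamma_0$ (and more generally on any semi-infinite ray on $\R$), where $\Im(\lambda)$ stays bounded and no such decay is available, which is exactly why $\mathcal{F}_0$ is never type~I.

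Part~(iii), completeness, is the step I expect to be the main obstacle, since it is not a local contour-deformation statement but a genuine injectivity assertion: if $f\in\Phi$ satisfies $F_k[f](\lambda)=0$ for all $\lambda\in\Gamma_k$ and all $k$, then $f=0$. The natural route is to invoke the validity of the transform pair, Proposition~\ref{prop:Transforms.valid:General:2}, which asserts $f(x)=\sum_{k=0}^N\int_{\Gamma_k}e^{i\lambda x}F_k[f](\lambda)\d\lambda$ for all $f\in\Phi$ and $x\in(0,\infty)$. If every $F_k[f]$ vanishes identically on its contour, then each integrand is identically zero, so the right-hand side is $0$ and hence $f\equiv0$ on $(0,\infty)$; by continuity and membership in $C$ this forces $f=0$. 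The delicate point is that the hypothesis of well-posedness of $(S,a)$ is precisely what Proposition~\ref{prop:Transforms.valid:General:2} requires, so I would be careful to check that completeness is claimed only under that well-posedness assumption (which part~(iii) explicitly stipulates), and that the reconstruction formula is being applied to the same class $\Phi$ on which the $F_k$ were assumed to vanish. Thus completeness reduces cleanly to the already-established inversion result, and the whole proposition follows by assembling parts~(i)--(iii).
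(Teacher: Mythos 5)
Your proposal is correct and takes essentially the same route as the paper: parts (i) and (ii) use exactly the paper's argument (the remainder $P_f$ from Lemma~\ref{lem:GEInt1}, Jordan's lemma to the right of $\Gamma_k$ or in $\C^+$ for type~II, and the $e^{i\lambda x}=e^{i\lambda x/2}\cdot e^{i\lambda x/2}$ splitting with $\Im(\lambda)\to+\infty$ for type~I), and part (iii) reduces completeness to the reconstruction formula under well-posedness. The only cosmetic difference is that in (iii) you cite Proposition~\ref{prop:Transforms.valid:General:2} directly, whereas the paper applies Proposition~\ref{prop:Transform.Method:General:2} to the IBVP with datum $f$ and evaluates at $t=0$; both yield the identical identity $f(x)=\sum_{k=0}^N\int_{\Gamma_k}e^{i\lambda x}F_k[f](\lambda)\d\lambda$, so the arguments coincide.
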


\begin{proof} ~
\begin{enumerate}
\item[(i)]{For each $k\in\{1,\ldots,N\}$, lemma~\ref{lem:GEInt1} implies
\BE \label{eqn:GEIntComplete3:Proof:1}
\int_{\Gamma_k} e^{i\lambda x}\lambda^{-n}(F_k[Sf](\lambda) - \lambda^nF_k[f](\lambda))\d\lambda = \int_{\Gamma_k} e^{i\lambda x}\lambda^{-n}P_f(\lambda)\d\lambda,
\EE
and the integrand is the product of $e^{i\lambda x}$ with an entire function decaying as $\lambda\to\infty$. Hence, by Jordan's lemma applied on the region to the right of $\Gamma_k$, the integral of the remainder functionals vanishes for all $x>0$. Equation~\eqref{eqn:GEIntComplete3:Proof:1} also holds for $k=0$ and we can apply Jordan's lemma on $\C^+$.}
\item[(ii)]{If $(n,a)$ obey the specified conditions, then, for $k\in\{1,2,\ldots,N\}$, $\Gamma_k$ is disjoint from $\R$ and $\Im(\lambda)\to+\infty$ as $\lambda\to\infty$ along either semi-infinite component of $\Gamma_k$. By lemma~\ref{lem:GEInt1},
\BE
\int_{\Gamma_k} e^{i\lambda x}(F_k[Sf](\lambda) - \lambda^nF_k[f](\lambda))\d\lambda = \int_{\Gamma_k} e^{i\lambda x/2}\left(e^{i\lambda x/2}P_f(\lambda)\right)\d\lambda,
\EE
and the integrand is the product of $e^{i\lambda x/2}$ with a function analytic on the enclosed set and decaying as $\lambda\to\infty$. Hence, by Jordan's lemma, the integral of the remainder functionals vanishes for all $x>0$.}
\item[(iii)]{Considering $f\in\Phi$ as the initial datum of the homogeneous initial-boundary value problem and applying proposition~\ref{prop:Transform.Method:General:2}, we evaluate the solution of problem~\eqref{eqn:IBVP} at $t=0$,
\BE
f(x) = q(x,0) = \sum_{k=0}^N\int_{\Gamma_k} e^{i\lambda x} F_k[f](\lambda) \d\lambda.
\EE
Thus if for all $k\in\{0,1,\ldots,N\}$ and for all $\lambda\in\Gamma_k$, $F_k[f](\lambda)=0$, then $f=0$.}
\end{enumerate}
\end{proof}

\begin{proof}[Proof of Theorem~\ref{thm:Diag.2}]
Proposition~\ref{prop:GEIntComplete3} establishes completeness of the augmented eigenfunctions and equation~\eqref{eqn:Spect.Rep.I}, under the assumption that the integrals converge. Theorem~\ref{prop:Transforms.valid:General:2} implies the required convergence.
\end{proof}

\begin{thm} \label{thm:Diag.5}
Let $(S,a)$ be a well-posed IBVP such that either $n$ is odd and $a=-i$ or $n$ is even and $\Re(a)>0$. Then the transform pair $(F[\cdot],f[\cdot])$ given by definition~\ref{defn:TransformPair.General} provides a spectral representation of $S$ in the sense of definition~\ref{defn:Spect.Rep.I.II}.
\end{thm}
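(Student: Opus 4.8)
The plan is to assemble Theorem~\ref{thm:Diag.5} from the pieces already proved in Proposition~\ref{prop:GEIntComplete3}, reorganising the augmented eigenfunctions into the two groups required by Definition~\ref{defn:Spect.Rep.I.II}. First I would set $\Gamma_{(\mathrm{I})} = \bigcup_{k=1}^N \Gamma_k$ and $\Gamma_{(\mathrm{II})} = \Gamma_0$, and correspondingly $E_{(\mathrm{I})} = \bigcup_{k=1}^N \mathcal{F}_k$ and $E_{(\mathrm{II})} = \mathcal{F}_0$. Under the stated hypotheses (either $n$ odd with $a=-i$, or $n$ even with $\Re(a)>0$) part~(ii) of Proposition~\ref{prop:GEIntComplete3} guarantees that each $\mathcal{F}_k$ with $k\geq1$ is a family of type~I augmented eigenfunctions up to integration over $\Gamma_k$; summing the defining relation~\eqref{eqn:defnAugEig.Control1} over $k$ shows that $E_{(\mathrm{I})}$ is a type~I family up to integration over $\Gamma_{(\mathrm{I})}$. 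Part~(i) of the same proposition, applied with $k=0$, shows that $E_{(\mathrm{II})} = \mathcal{F}_0$ is a type~II family up to integration over $\Gamma_0$.

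Next I would verify the two convergence hypotheses~\eqref{eqn:Spect.Rep.defnI.II.conv1} and~\eqref{eqn:Spect.Rep.defnI.II.conv2}. For~\eqref{eqn:Spect.Rep.defnI.II.conv2}, convergence of $\int_{\Gamma_0} e^{i\lambda x} E[\phi](\lambda)\d\lambda$ is immediate from Proposition~\ref{prop:Transforms.valid:General:2}, since that result establishes convergence of the full inverse transform and the $\Gamma_0$ piece deforms onto $\mathbb{R}$ where the ordinary Fourier transform is valid. For~\eqref{eqn:Spect.Rep.defnI.II.conv1}, I need convergence of $\int_{\Gamma_{(\mathrm{I})}} e^{i\lambda x} E[S\phi](\lambda)\d\lambda$; this follows by the argument indicated at the end of section~\ref{ssec:Spectral:LKdV}, namely that $S\phi\in C$ but need not satisfy the boundary conditions, so the inner integrals in $F_k[S\phi]$ decay only like $O(\lambda^{-1})$ rather than faster, which is still enough to make the contour integral over each semi-infinite component of $\Gamma_k$ converge. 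Completeness of $E = E_{(\mathrm{I})}\cup E_{(\mathrm{II})} = \mathcal{F}$ is exactly part~(iii) of Proposition~\ref{prop:GEIntComplete3}.

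Finally I would establish the spectral-representation identities~\eqref{eqn:Spect.Rep.II.1} and~\eqref{eqn:Spect.Rep.II.2}. These come directly from Lemma~\ref{lem:GEInt1}: writing $F_k[S\phi](\lambda) = \lambda^n F_k[\phi](\lambda) + P_\phi(\lambda)$ and integrating against the appropriate kernel, equation~\eqref{eqn:Spect.Rep.II.1} reduces to showing $\int_{\Gamma_{(\mathrm{I})}} e^{i\lambda x} P_\phi(\lambda)\d\lambda = 0$, which is precisely the type~I vanishing~\eqref{eqn:defnAugEig.Control1} just verified, while~\eqref{eqn:Spect.Rep.II.2} reduces to $\int_{\Gamma_0} e^{i\lambda x}\lambda^{-n} P_\phi(\lambda)\d\lambda = 0$, which is the type~II vanishing~\eqref{eqn:defnAugEig.Control2} for $\mathcal{F}_0$ established in part~(i).

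I expect the main obstacle to be the convergence claim~\eqref{eqn:Spect.Rep.defnI.II.conv1} for $\int_{\Gamma_{(\mathrm{I})}} e^{i\lambda x} E[S\phi](\lambda)\d\lambda$. Unlike the type~II integrals, there is no compensating factor of $\lambda^{-n}$, so convergence rests entirely on the exponential decay of $e^{i\lambda x}$ as $\Im(\lambda)\to+\infty$ along the semi-infinite components of $\Gamma_k$ — which is exactly the geometric feature (the contour $\Gamma_{(\mathrm{I})}$ having no semi-infinite part on $\mathbb{R}$) singled out in~\eqref{eqn:Contours.for.type.I} and guaranteed by the hypotheses on $(n,a)$. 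The delicate point is that $S\phi$ loses the boundary conditions, degrading the polynomial decay of the transform to $O(\lambda^{-1})$; one must check that the exponential factor $e^{i\lambda x}$ with $x>0$ still dominates, so that each contour integral converges absolutely. This is genuinely where the distinction between type~I and type~II, and hence the necessity of the hypotheses in the theorem, is forced.
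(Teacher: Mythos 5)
Your proposal is correct and takes essentially the same route as the paper: both assemble the theorem from proposition~\ref{prop:GEIntComplete3} with the decomposition $\Gamma_{(\mathrm{I})}=\bigcup_{k=1}^N\Gamma_k$, $\Gamma_{(\mathrm{II})}=\Gamma_0$, invoke completeness from part~(iii), and reduce the remaining work to the convergence of $\int_{\Gamma_k}e^{i\lambda x}F_k[S\phi](\lambda)\d\lambda$ for $k\geq1$, which in both arguments rests on the exponential decay of the kernel since $\Im\lambda\to+\infty$ along the semi-infinite components of $\Gamma_k$ under the stated hypotheses on $(n,a)$. The only cosmetic difference is that the paper obtains this convergence by writing $F_k[S\phi]=\lambda^nF_k[\phi]+P_\phi$ via lemma~\ref{lem:GEInt1} and applying Jordan's lemma, whereas you argue directly from the $O(\lambda^{-1})$ decay of the transform of $S\phi$; both are valid and rely on the same geometric feature~\eqref{eqn:Contours.for.type.I}.
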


\begin{proof}
Proposition~\ref{prop:GEIntComplete3} establishes that $\bigcup_{k=1}^N \mathcal{F}_k$ is a family of type~I augmented eigenfunctions up to integration over $\bigcup_{k=1}^N \Gamma_k$, that $\mathcal{F}_0$ is a family of type~II augmented eigenfunctions up to integration over $\Gamma_0$ and that $\mathcal{F}$ is complete. It only remains to establish convergence of
\BE \label{eqn:Diag.5:Proof:1}
\int_{\Gamma_k} e^{i\lambda x} F_k[Sf](\lambda)\d\lambda,
\EE
for all $x\in(0,\infty)$, all $k\in\{1,2,\ldots,N\}$ and all $f\in\Phi$. By lemma~\ref{lem:GEInt1}, the integral~\eqref{eqn:Diag.5:Proof:1} may be written
\BE
\int_{\Gamma_k} e^{i\lambda x/2} \left[ e^{i\lambda x/2} \left( \lambda^n F_k[f](\lambda) + P_f(\lambda) \right) \right] \d\lambda,
\EE
where $F_k[f](\lambda)$ is bounded and holomorphic on $\Gamma_k$ and the region lying to the right of $\Gamma_k$, and $P_f$ is a polynomial. Hence, by Jordan's lemma, this integral converges to $0$.
\end{proof}

\begin{rmk} \label{rmk:Gamma.k.ef.at.inf}
As our choice of $R$ in the definition of $\Gamma_k$, for $k\in\{1,2,\ldots,N\}$, may be arbitrarily large, the contours $\Gamma_k$ need not pass through any finite region. By considering the limit $R\to\infty$, we claim that $\bigcup_{k=1}^N\mathcal{F}_k$ can be seen to represent \emph{spectral objects with eigenvalue at infinity}.
\end{rmk}

\section{Conclusions}
In this paper we have elucidated the spectral meaning of the integral representation for the solution of a well-posed half-line IBVP, given by the unified transform method of Fokas. We proved that this approach  can be used to construct a transform-inverse transform pair tailored to the problem, where the forward transform can be viewed as a family of type~II augmented eigenfunctions. Moreover, these type~II augmented eigenfunctions provide a spectral representation of the associated differential operator in the sense of definition~\ref{defn:Spect.Rep.II}

The definition of augmented eigenfunctions is a direct extension of the ``generalised eigenfunctions'' introduced by Gelfand, and has clear analogies with the ``pseudo eigenfunctions'' as described e.g.\ in \cite{ET2005a}. The crucial difference is that the augmented eigenfunctions are only defined modulo terms that are analytic in certain subdomains of $\C$, and the appropriate use of analyticity and Cauchy's theorem are crucial for our results.

\subsubsection*{Acknowledgement}
The research leading to these results has received funding from the European Union's Seventh Framework Programme FP7-REGPOT-2009-1 under grant agreement n$^\circ$ 245749.

\bibliographystyle{amsplain}
\bibliography{dbrefs}

\end{document}